\documentclass[11 pt,leqno]{amsart}

\usepackage{amsmath}
\usepackage{amssymb}
\usepackage{amsfonts}
\usepackage{amsthm}
\usepackage{hyperref}
\usepackage{verbatim}

\usepackage{bbm}
\usepackage[mathscr]{eucal}

\usepackage{pgfplots}
\pgfplotsset{compat=1.16}
\usetikzlibrary{arrows.meta}
\usetikzlibrary{intersections}

\usepackage{tikz}
\usetikzlibrary{intersections}
\usetikzlibrary{calc}
\usepackage{bbm}
\usepackage{color}\usepackage{enumitem}
\usepackage[mathscr]{eucal}

\usepackage{mathrsfs}
\usepackage{tcolorbox}
\usepackage{color}

\makeatletter
\def\subsection{\@startsection{subsection}{2}%
  \z@{.5\linespacing\@plus.7\linespacing}{-.5em}%
  {\normalfont\itshape}}
\makeatother

\makeatletter
\def\author@andify{%
  \nxandlist{\unskip\penalty-1\;\;\space\ignorespaces}%
           {\unskip\penalty-1\;\;\space\ignorespaces}%
           {\unskip\penalty-2\;\;\space\ignorespaces}%
}

\makeatother

\theoremstyle{plain}
\newtheorem{thm}{Theorem}[section]
\newtheorem{lemma}[thm]{Lemma}

\newtheorem{proposition}[thm]{Proposition}

\theoremstyle{remark}

\newtheorem*{remarka}{Remark}

\numberwithin{equation}{section}

\newcommand\Bellc{{B_{\ell+1}^\complement}}

\newcommand{\inn}[2]{\langle#1,#2\rangle}

\begin{document}

\subjclass[2020]{42B25, 28A80}

\keywords{Spherical maximal functions, weighted norm inequalities, restricted dilation sets, fractal dimensions, Legendre--Assouad function}

\title[Endpoint power weight inequalities]{Endpoint power weight inequalities for spherical maximal functions}

\author[M. Fraccaroli, J. Lee, N. Lei, J. Roos, A. Seeger]{Marco Fraccaroli \ Juyoung Lee \ Nikou Lei \\ Joris Roos \ Andreas Seeger}
\address{M.F., Department of Mathematics and Statistics, University of Massachusetts Lowell, Lowell, MA, USA}
\email{marco\_fraccaroli@uml.edu}

\address{J.L., Department of Mathematics, University of British Columbia, Vancouver, BC, Canada}
\email{juyounglee@math.ubc.edu}

\address{N.L., Department of Mathematics, University of British Columbia, Vancouver, BC, Canada}
\email{leinikou@math.ubc.ca}
\address{J.R., Department of Mathematics and Statistics, University of Massachusetts Lowell, Lowell, MA, USA}
\email{joris\_roos@uml.edu}

\address{A.S., Department of Mathematics, University of Wisconsin--Madison, Madison, WI, USA}
\email{aseeger@wisc.edu}

\begin{abstract} We prove sharp $L^p(|x|^\alpha)$ estimate for spherical maximal functions. We resolve the remaining open problem for the full maximal operator for $\alpha=1-d$ and $p<2$ and prove a more general result
involving a local version of Assouad dimension of the underlying dilation set. We also make a case study with endpoint bounds for a model case involving convex sequences in $[1,2]$.

\end{abstract}

\maketitle

\newcommand{\todo}[1]{{\color{blue}ToDo: #1}}

\section{Introduction}

Let $d\ge 2$ and $\sigma$ the normalized surface measure on $\{x\in {{\mathbb{R}}}^d:|x|=1\}$. Define the dilate $\sigma_t$ by $\inn{\sigma_t}{f}=\inn{\sigma}{f(t\cdot )}$.
For a $f\in L^1_{{\text{\rm loc}}}({{\mathbb{R}}}^d)$, let
\[A_tf(x) =f*\sigma_t(x)=\int f(x+t{\omega}) d\sigma({\omega}).\]
Given ${{\mathcal{E}}}\subset (0,\infty)$ define
the maximal operator ${{\mathcal{M}}}_{{\mathcal{E}}} $ by
\[{{\mathcal{M}}}_{{\mathcal{E}}} f(x)= \sup_{t\in {{\mathcal{E}}}} |A_t f(x)|,\]
which is well defined as a measurable function at least when $f$ is continuous.
Let $w_\alpha(x)=|x|^\alpha$.
In this paper we continue the investigation of boundedness of the operator ${{\mathcal{M}}}_{{\mathcal{E}}}$ on weighted $L^p(w_\alpha)$ spaces in
\cite{DuoandikoetxeaVega, DuoandikoetxeaSeijo, LeeJ-weighted, FraccaroliRoosSeeger}. We note that when $p>1$, $w_\alpha$ is an $A_p$ weight (\cite{GarciaCuerva-Rubio}) for $-d<\alpha<d(p-1)$, but more restrictive conditions are needed for boundedness on the spherical maximal operators.

For the lacunary maximal operator ${{\mathcal{M}}}_{{{\mathcal{E}}}^{\mathrm{lac}}}$
with
${{\mathcal{E}}}^{\mathrm{lac}}=\{2^k:k\in {{\mathbb{Z}}}\}$ it was proved by Duoandikoetxea and Vega \cite{DuoandikoetxeaVega} that ${{\mathcal{M}}}_{{{\mathcal{E}}}^{\mathrm{lac}}}$ is bounded on $L^p(w_\alpha)$ if and only if $1-d\le \alpha<(d-1)(p-1)$, $p>1$.

For the full maximal operator ${{\mathcal{M}}}:={{\mathcal{M}}}_{(0,\infty)}$ the $L^p$ inequalities in the unweighted case are due to Stein \cite{SteinPNAS1976} and Bourgain \cite{BourgainJdA1986}, and the $L^p(w_\alpha)$ boundedness was almost fully covered in \cite{DuoandikoetxeaVega} where it was shown that boundedness holds when $p>\frac{d}{d-1}$ and $1-d<\alpha<(d-1)p-d$; moreover boundedness fails for all $\alpha$ if $p\le\frac{d}{d-1}$, and for $p>\frac{d}{d-1}$ when $\alpha \notin [1-d, (d-1)(p-1)-1) $. This also applies for the local maximal function $\sup_{t\in[1,2]}|A_t f|$. In \cite{DuoandikoetxeaVega} the case $p>\frac{d}{d-1}$, $\alpha=1-d$ was left open. The second named author proved more recently
\cite{LeeJ-weighted} that ${{\mathcal{M}}}$ is bounded on $L^p(w_{1-d})$ in the cases
$d=2$, $p>2$, and $d\ge 3$, $p\ge 2$; in particular this gives a complete answer in two dimensions. We complete the story for the full maximal operator by adding the relevant result for $p<2$; moreover we will simplify the proof in \cite{LeeJ-weighted} for $p\ge 2$.

\begin{thm}\label{thm:newfull}
For $d\ge 3$, ${{\mathcal{M}}}$ is bounded on $L^p(w_{1-d})$ for all $p>\frac{d}{d-1}$.
\end{thm}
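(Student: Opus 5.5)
We reduce matters to a local maximal function by scaling and a dyadic decomposition in $|x|$. Since $w_{1-d}$ is homogeneous, $\|f(\lambda\cdot)\|_{L^p(w_{1-d})}=\lambda^{-(1-d+d)/p}\|f\|_{L^p(w_{1-d})}$ and $A_t$ commutes with dilations. So it suffices to bound $\sup_{k\in\mathbb Z}\sup_{t\in[2^k,2^{k+1}]}|A_tf|$, i.e.\ the full operator ${{\mathcal{M}}}$, by combining a single-scale estimate with a square-function or Littlewood--Paley argument. We split the frequency into a low part and high parts, writing $f=P_0f+\sum_{j\ge1}P_jf$ with Littlewood--Paley projections. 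The low-frequency part is dominated by the Hardy--Littlewood maximal function, which is bounded on $L^p(w_{1-d})$ because $w_{1-d}\in A_p$ for $p>1$, since $1-d>-d$. For the high-frequency pieces we aim for the estimate
\[\Big\|\sup_{t\in[1,2]}|A_t P_jf|\Big\|_{L^p(w_{1-d})}\lesssim 2^{-\epsilon j}\|f\|_{L^p(w_{1-d})},\]
together with its rescaled versions. We then sum these over scales using vector-valued (weighted) Littlewood--Paley inequalities, which hold since the weight is $A_p$.

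For the single-scale estimate with $t\in[1,2]$ we decompose space. We write $f=\sum_\ell f\mathbbm 1_{B_\ell}$ with annuli $B_\ell=\{|y|\approx 2^{\ell}\}$ for $\ell\ge 1$ and the unit ball as the remaining piece, and decompose the output region $\{|x|\approx 2^m\}$ in the same way. If $|x|\ge 8$, then only $|y|\approx|x|$ contributes. There the weight is essentially constant, equal to $2^{m(1-d)}$ on both sides, so the unweighted Stein--Bourgain local estimate for $P_j$ applies. That estimate gives decay $2^{-j\epsilon(p)}$ for $p>\frac d{d-1}$, and the pieces are summed by almost-orthogonality in $m$ because supports have bounded overlap. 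The critical region is $|x|\lesssim 4$. There $w_{1-d}$ blows up at the origin like the inverse of the density of a sphere through it, and inputs from $|y|\le 1$ interact with spheres passing near $0$. For this region we use the geometric fact that for fixed $x$, the sphere $x+tS^{d-1}$ meets $\{|y|\le\delta\}$ only for $t$ in an interval of length $\delta$ around $|x|$, and then in a cap of measure $\delta^{d-1}$. We decompose further: the output annuli $|x|\approx 2^{-m}$ ($m\ge0$) and the input pieces with $|y|\approx 2^{-n}$ yield operators $T_{m,n,j}$.

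The main obstacle is these pieces $T_{m,n,j}$ when $m,n$ are large, since the weight $|x|^{1-d}$ exactly balances the cap measure and there is no slack. For $p\ge2$ I would use an $L^2$ estimate. I write $\sup_t$ via the Sobolev embedding in $t$, $\sup_t|F|^2\le\|F\|_2\|\partial_tF\|_2$. Then, using that $f_n$ is supported in a ball of radius $2^{-n}$, I would obtain $L^2(w_{1-d})\to L^2(w_{1-d})$ bounds with decay $2^{-\epsilon|m-n|}2^{-\epsilon j}$ from the $L^2$ decay of $\widehat\sigma$ and a $TT^*$ computation in the weighted space. This is presumably the simplification of \cite{LeeJ-weighted}. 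For $\frac d{d-1}<p<2$ I would interpolate this with an $L^1\to L^{1,\infty}$-type estimate, or better a restricted weak type estimate near $p=\frac d{d-1}$, for each piece. That estimate is obtained by the explicit kernel bound $|A_tP_j\mathbbm 1_{|y|\le 2^{-n}}|$ integrated against $|x|^{1-d}$ over the set $\{||x|-t|\lesssim 2^{-n}\}$, which gives a restricted weak type bound with a loss $2^{j(d/p'-(d-1))}$-type that is negative exactly when $p>\frac d{d-1}$. Interpolating (real interpolation in the $(m,n)$-indexed family with geometric decay) and summing gives the theorem. I expect the delicate point to be obtaining uniform summability in $|m-n|$ at the endpoint weight without losing a logarithm, possibly requiring an interpolation-of-families (Bourgain's trick) argument rather than simple summation.
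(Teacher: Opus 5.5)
There is a genuine gap. Your central estimate, $\|\sup_{t\in[1,2]}|A_tP_jf|\|_{L^p(w_{1-d})}\lesssim 2^{-\epsilon j}\|f\|_{L^p(w_{1-d})}$, is false for every $p$, already for the single average $A_1P_j$. Take $f(y)=\phi(2^j(|y|-1))$ with $\widehat\phi$ even, nonnegative, supported in $\{\frac12\le|\tau|\le 2\}$, so $\phi(0)\neq0$; this $f$ is essentially frequency localized at $|\xi|\approx 2^j$. Then $\|f\|_{L^p(w_{1-d})}\approx 2^{-j/p}$, while $|A_1f(x)|\ge\frac12|\phi(0)|$ for $|x|\le c2^{-j}$, a set of $w_{1-d}$-measure $\approx 2^{-j}$. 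This is exactly what makes $\alpha=1-d$ an endpoint. Localize the output to $|x|\approx 2^{-k}$ and the input to $|y|\approx 1$ (the paper's $f_{\mathrm{mid}}$). The frequency-$2^j$ piece then has norm $\lesssim 2^{-(k-j)/p}$ for $k>j$ (Lemma \ref{lem:k>j}), and $\lesssim \Lambda_{k,j}^{1/p}=2^{-(j-k)((d-1)(p-1)-1)/p}$ for $j\ge k$, $p\le 2$ (Lemma \ref{lem:k<j} with $E=[1,2]$). So there is decay only in $|j-k|$, none at all on the diagonal $j=k$, and this is sharp.

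Your spatial decomposition also targets the wrong interaction. For $t\in[1,2]$ and $|x|\le 2^{-m}\ll 1$, the sphere $x+tS^{d-1}$ stays at distance $\ge\frac12$ from the origin. So your pieces $T_{m,n,j}$ with $m,n$ both large interact only through kernel tails. Inputs near $0$ only reach $|x|\approx 1$, where the weight is $\approx1$ and $|y|^{1-d}\gtrsim 1$, so the unweighted theorem covers them. The critical interaction is output near $0$ against input near the sphere $\{|y|=t\}$.

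Consequently the difficulty is not a logarithm to be removed by Bourgain's interpolation trick. It is summing over $k$ infinitely many $O(1)$ diagonal pieces. For fixed $n=j-k$, disjointness of the shells $\{|x|\approx 2^{-k}\}$ gives at best $(\sum_k\|L_{k+n}f\|_p^p)^{1/p}$, which is not bounded by $\|f\|_p$ when $p<2$. Orthogonality (weighted Littlewood--Paley theory) only controls $\|(\sum_k|L_{k+n}f|^2)^{1/2}\|_p$. No interpolation of scalar bounds for individual pieces closes this $\ell^p$ versus $\ell^2$ gap. (Incidentally, the unweighted loss you quote should be $2^{j(d/p-(d-1))}$, not $2^{j(d/p'-(d-1))}$.)

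The missing idea is the paper's bootstrap. Let $\mathscr C(\ell)$ be the norm of the operator truncated to $|x|\ge 2^{-\ell-1}$; a priori $\mathscr C(\ell)\lesssim 2^{\ell(d-1)/p}$. Prove the vector-valued bound \eqref{eq:LpellqS} with $q=2$ by complex interpolation between two estimates:
\begin{itemize}
\item an $L^p(\ell^p)$ bound, with decay in $|j-k|$ from the two lemmas;
\item an $L^p(\ell^\infty)$ bound, where positivity of $A_t$ gives $\sup_t|\sum_k S^{k,n}_tf_k|\le \sup_t A_t[\sup_k|L_{k+n}[\mathbbm 1_{\mathcal U}f_k]|]$, so the constant is $\mathscr C(\ell)$ times the norm of $M_{HL}$ on $L^p(w_{1-d})$.
\end{itemize}
Kurtz's weighted Littlewood--Paley inequality then yields $\mathscr C(\ell)\le C(1+\epsilon^{-1}\mathscr C(\ell)^{1-p/2})$, which forces $\mathscr C(\ell)=O(1)$. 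This needs $p\le 2$, hence $d\ge3$ for a nonempty range; $p>2$ follows by interpolation with $L^\infty$.

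Your $L^2$ route for $p\ge 2$ is viable only after you replace $2^{-\epsilon j}$ by decay in $|m-j|$ and sum the diagonal by orthogonality. That works at $p=2$ precisely because the $\ell^2$ sums then match.
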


For general ${{\mathcal{E}}}\subset (0,\infty)$ the behavior of $L^p$ and $L^p(w_\alpha)$ boundedness depends on various notions of fractal dimensions.

To describe these first let, for $A\subset (0,\infty)$, $N(A,\delta)$ be the minimal number of intervals of length $\delta$ needed to cover $A$ (i.e. the $\delta$ metric entropy of $A$).
For $R>0$ let \[{{\mathcal{E}}}_R=R^{-1} ({{\mathcal{E}}}\cap[R,2R)). \]

Define $\beta=\beta({{\mathcal{E}}})$ as
\begin{equation}
\beta= \inf \big\{\eta: \sup_R\sup_{0<\delta\le 1} \delta^\eta N({{\mathcal{E}}}_R,\delta) <\infty \} .
\end{equation}
For $E\subset [1,2]$, $\beta(E)$ is simply the upper Minkowski-dimension of $E$. By \cite{SeegerWaingerWright1995} $M_{{\mathcal{E}}}$ is bounded on $L^p({{\mathbb{R}}}^d)$ for $p>1+\frac{\beta}{d-1}$, and unbounded for $p<1+\frac{\beta}{d-1}$, see also \cite{STW-jussieu}
for some endpoint results and \cite{SeegerWaingerWright1997,RoosSeeger-problems} for open problems suggested by the behavior on radial functions.

Let $\gamma=\gamma({{\mathcal{E}}})$ be defined as
\begin{equation} \gamma= \inf \Big\{\eta: \sup_R \sup_{\substack{0<\delta \le |I| \le 1\\ \text{$I$ subinterval of $[1,2]$}}} \Big(\frac\delta{|I|}\Big) ^{\eta}N({{\mathcal{E}}}_R\cap I,\delta) <\infty\Big\} .\end{equation}
For $E\subset [1,2]$, $\gamma(E)$ is the Assouad dimension of $E$.
For the full dilation set we have $\gamma=1$, so the following theorem on $L^p(w_\alpha)$ boundedness contains Theorem \ref{thm:newfull} as a special case.
\begin{thm}\label{thm:newfullga}
Let $d\ge 2$, and let $\gamma=\gamma({{\mathcal{E}}})$. Then $M_{{\mathcal{E}}}$ is bounded on $L^p(w_{1-d})$ for all $p>1+ \frac{\gamma}{d-1}$.
\end{thm}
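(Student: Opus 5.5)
We reduce to dyadic pieces in space and in dilations. Since the weight $w_{1-d}$ is homogeneous and $A_t$ commutes with dilations, it suffices to bound $\sup_{t\in{{\mathcal{E}}}_R\cdot R}|A_tf|$ uniformly in $R$, and by scaling we may take $R=1$: we prove $\|\sup_{t\in E}|A_tf|\|_{L^p(w_{1-d})}\lesssim\|f\|_{L^p(w_{1-d})}$ for $E\subset[1,2]$ with Assouad-type constants uniform over the family ${{\mathcal{E}}}_R$. We then reassemble over $R$ by a standard lacunary argument: a Littlewood--Paley decomposition $f=\sum_k P_kf$, the Duoandikoetxea--Vega lacunary bound (valid at the endpoint $\alpha=1-d$), and square-function estimates in $L^p(w_{1-d})$. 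These are available since $w_{1-d}\in A_p$ for $p>1$, because $-d<1-d<d(p-1)$.

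For the single-scale operator we decompose space dyadically: $f=\sum_{j}f\mathbbm 1_{\{|y|\sim 2^{j}\}}$ and $x$ in annuli $|x|\sim 2^{l}$. When $|x|\ge 2^{5}$ the weight is essentially constant on the relevant region $|y|\approx |x|$, and the unweighted result of \cite{SeegerWaingerWright1995} applies since $\gamma\ge\beta$. The case $|x|,|y|\lesssim 1$ with $|x|\not\approx|y|$ is handled by trivial kernel bounds. The critical region is where the sphere $x+tS^{d-1}$ passes near the origin, i.e. $||x|-t|\le 2^{-m}$ with $|y|\sim 2^{-m}$, where the weight $|y|^{1-d}$ is singular. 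Here we further localize $f$ in frequency at scale $2^{n}$ and estimate the pieces $T_{m,n}f=\sup_{t\in E}|A_t(P_n f_m)|$, with $f_m=f\mathbbm 1_{|y|\sim2^{-m}}$, restricted to $||x|-1|\lesssim 2^{-m}$.

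The key point is that for fixed $x$ only the $t\in E$ with $|t-|x||\lesssim 2^{-m}$ matter. After frequency localization at scale $2^{n}$ one needs $N(E\cap I,2^{-n})$ values of $t$ for an interval $I$ of length $2^{-m}$, and this is $\lesssim 2^{(n-m)\gamma'}$ for any $\gamma'>\gamma$; this is exactly where the Assouad dimension enters. For $n\le m$ a single $t$ suffices, and the bound follows from the $L^1\to L^\infty$ type estimate $|A_tP_nf_m|\lesssim 2^{n}\cdot$(average over a $2^{-n}$-neighborhood of the sphere). Combined with the measure of the $x$-region ($\sim 2^{-m}$) and the weights, this gives a summable bound.

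For $n>m$ we interpolate between two estimates at each fixed $t$. The first is an $L^1(w_{1-d})\to L^{1,\infty}$- or $L^1$-type bound gaining from the smallness of the support of $f_m$ relative to the weight. The second is the $L^2$ estimate $\|A_tP_n\|_{L^2\to L^2}\lesssim 2^{-n(d-1)/2}$ from Fourier decay. The $\ell^p$-sum over the $2^{(n-m)\gamma'}$ values of $t$ costs $2^{(n-m)\gamma'/p}$. We then aim for
\[\|T_{m,n}f\|_{L^p(w_{1-d})}\lesssim 2^{-(n-m)\epsilon(p)}\|f_m\|_{L^p(w_{1-d})},\qquad \epsilon(p)>0 \text{ iff } p>1+\tfrac{\gamma}{d-1},\]
and sum over $n$ and then over $m$ using the disjointness of the supports of the $f_m$ (an $\ell^p$ orthogonality in $m$, since the $x$-regions $||x|-1|\sim2^{-m}$ also have bounded overlap).

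The main obstacle is this $n>m$ estimate: obtaining the exact exponent $\frac{d-1}{p'}$ gain versus the $\gamma/p$ loss at the endpoint weight. Here the weight exactly balances the concentration near the origin, so no slack comes from $\alpha$. I would first try localizing $f_m$ further into $2^{-n}$-caps, viewing the $2^{-m}$-ball around the origin at resolution $2^{-n}$. A sphere through the ball meets it in a $(d-1)$-dimensional slab of width $2^{-n}$, and this geometry gives the $L^p$ bound with factor $2^{-(n-m)(d-1)/p'}$ for a single $t$, by interpolating the $L^1$ and $L^\infty$ estimates of the rescaled problem. Scaling by $2^{m}$ reduces to unit scale with frequency $2^{n-m}$, where the estimate is the known local smoothing-free bound.
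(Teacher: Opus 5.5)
\textbf{There is a genuine gap: you have placed the difficulty in the wrong configuration.}

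The region you call critical has $f_m$ supported in $|y|\sim 2^{-m}$ and $x$ near the unit sphere. At $\alpha=1-d<0$ this region is trivial. If $f$ is supported in $\{|y|\le 1/4\}$, then $\sup_{t\in E}|A_tf|$ lives in $\{|x|\ge 3/4\}$, where $w_{1-d}\lesssim 1$. On the input side the weight is large ($|y|^{1-d}\ge 4^{d-1}$), so $\|f\|_p\lesssim \|f\|_{L^p(w_{1-d})}$. The unweighted theorem of \cite{SeegerWaingerWright1995} for $p>1+\frac{\beta}{d-1}$ therefore already gives the bound, with slack and without any Assouad dimension; this is exactly how the paper disposes of $f_{\mathrm{low}}$.

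Conversely, you dismiss by ``trivial kernel bounds'' the configuration $|x|\sim 2^{-k}\ll 1$, $|y|\sim 1$. That is where the whole endpoint problem lives. There the output weight is $\approx 2^{k(d-1)}$, and trivial or unweighted bounds lose precisely $2^{k(d-1)/p}$ (compare the a priori bound ${{\mathscr C}}(\ell)\lesssim 2^{\ell(d-1)/p}$ in the paper). The numerology you describe, counting $N(E\cap I,2^{-n})$ for $|I|\approx 2^{-m}$ against a gain $2^{-(n-m)(d-1)/p'}$, is the right one, but it belongs to this other configuration. There you decompose the input into radial shells of width $2^{-k}$ (for $x\in B_k$ and $t\in I_{k,\mu}$ the sphere stays in one shell), and obtain Lemma~\ref{lem:k<j} and Lemma~\ref{lem:k>j}.

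In that regime, at $\alpha=1-d$, the per-annulus bound is $\Lambda_{k,k+n}\lesssim 2^{-n((d-1)(p-1)-\gamma-\varepsilon)}$. This is uniform in $k$ but has no decay in $k$; this is where ``the weight exactly balances''. The real obstacle is then summing over the annuli $k$, and your $\ell^p$-orthogonality via disjoint supports of the $f_m$ is not available. All annuli draw on the same input region $|y|\sim 1$, and the only separation is in frequency ($j=k+n$). That gives $\ell^2$-orthogonality, not $\ell^p$: for $p<2$, $(\sum_k\|L_{k+n}f\|_p^p)^{1/p}$ is not controlled by $\|f\|_p$. This is exactly why $p<2$ was the open case.

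The paper overcomes this in four steps:
\begin{enumerate}
\item It proves an $L^p(\ell^q)$ estimate for $\sup_t|\sum_k S^{k,n}_tf_k|$ by interpolating the $\ell^p$ bound (from the lemmas) with an $\ell^\infty$ bound.
\item The $\ell^\infty$ bound comes from positivity of $A_t$, the a priori constant ${{\mathscr C}}(\ell)$ and the Hardy--Littlewood maximal function on $L^p(w_{1-d})$.
\item It takes $q=2$ and applies weighted Littlewood--Paley theory in $L^p(w_{1-d})$.
\item It closes with the Christ bootstrap ${{\mathscr C}}(\ell)\le C(1+\varepsilon^{-1}{{\mathscr C}}(\ell)^{1-p/2})$.
\end{enumerate}
Nothing in your plan plays this role, and nothing treats the output-near-origin regime at all. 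As written, the proposal does not prove the theorem.
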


For the range $1+\frac\beta{d-1}\le p\le 1+\frac{\gamma}{d-1}$ we need to put additional restrictions on $\alpha$ (relying on other fractal quantities) to get $L^p$ boundedness.

For $\theta\in [0,1)$ let
$\gamma_\theta=\gamma_\theta({{\mathcal{E}}})$ be defined as
\begin{equation} \gamma_\theta= \inf \Big\{\eta: \sup_R \sup_{\substack{0<\delta \le 1 \\ \text{$I$ subinterval of $[1,2]$
}\\\delta^\theta \le |I|\le 1}} \Big(\frac\delta{|I|} \Big) ^{\eta}N({{\mathcal{E}}}_R\cap I,\delta) <\infty\Big\} .\end{equation}
For $E\subset [1,2]$ the function $\theta \to \gamma_\theta(E)$ is the upper Assouad spectrum of dimensions, defined in \cite{FraserHareHareTroscheitYu}.

Let \begin{equation}
{{\mathfrak{N}}}_{{\mathcal{E}}}^\sharp(b,\delta) =\sup_{R>0} \,\,
\sup_{\substack{\text{$I\subset [1,2]$} \\
\delta<|I|\le 1
}}|I|^{-b} N({{\mathcal{E}}}_R\cap I,\delta)
\end{equation}
and \begin{equation}
\nu^\sharp _{{\mathcal{E}}} (b) = \inf \{ \eta: \sup_{0<\delta\le 1} \delta^\eta {{\mathfrak{N}}}_{{\mathcal{E}}}^\sharp(b,\delta) <\infty\} .
\end{equation}
For $E\subset [1,2]$ the function $b\mapsto \nu^\sharp_{{\mathcal{E}}}(b) $ is the
{\it Legendre--Assouad function} of $E$ introduced in \cite{BeltranRoosSeeger-radial}; it can be identified with the Legendre transform of the function $\theta\mapsto (\theta-1)\gamma_{\theta}(E)$, see \cite{BeltranRoosRutarSeeger}. By a use of a dilation invariant metric it can be interpreted as a Legendre--Assouad type function even for general sets ${{\mathcal{E}}}$ (see \cite{FraccaroliRoosSeeger}).

We recall some properties of $\nu_{{\mathcal{E}}}^\sharp$ ({\emph{cf.\ }} \cite{BeltranRoosSeeger-radial, BeltranRoosRutarSeeger}).
\begin{itemize}
\item $\nu^\sharp_{{\mathcal{E}}}$ is continuous, nondecreasing and convex on ${{\mathbb{R}}}$.

\item $\nu^\sharp_{{\mathcal{E}}} (b)=b$ for $b\ge \gamma$.

\item There exists $b_*\in [0, \gamma]$ such that $\nu^\sharp_{{\mathcal{E}}} (b)=\beta$ for $b\le b_*$ and $\nu^\sharp_{{\mathcal{E}}}$ is strictly increasing on $[b_*,\infty)$.
\end{itemize}
\begin{figure}
\begin{tikzpicture}[scale=.7]
\begin{axis}[
width=11cm,
height=11cm,
axis lines=middle,
axis line style={-Stealth},
axis equal image,
xmin=-0.55, xmax=1.55,
ymin=-0.05, ymax=1.60,
xtick={-0.5,0,0.2,0.9,1,1.5},
ytick={0,0.5,0.9,1,1.5},
grid=major,
grid style={dashed,gray!50},
xlabel={$b$},
ylabel={$y$},
xlabel style={at={(ticklabel* cs:1)}, anchor=west},
ylabel style={at={(ticklabel* cs:1)}, anchor=south},
clip=false
]

\addplot[
very thick,
domain=-0.5:0.2,
samples=2
]{0.5};

\addplot[
very thick,
domain=0.2:0.9,
samples=150
]{0.5 + (50/49)*(x-0.2)^2 - (100/343)*(x-0.2)^3};

\addplot[
very thick,
domain=0.9:1.5,
samples=2
]{x};

\addplot[
only marks,
mark=*,
mark size=2.5pt,
] coordinates {(0.2,0.5) (0.9,0.9)};

Label\node[black] at (axis cs:1.28,1.14) {$\nu_E^\sharp$};

\end{axis}
\end{tikzpicture}
\caption{Legendre--Assouad function for a set $E\subset[1,2]$ with $\beta=1/2$, $b_*=1/5$ and $\gamma=9/10$.}
\end{figure}

The main result in \cite{FraccaroliRoosSeeger} states that we can characterize the closure of the `weighted type set'
\begin{equation}\label{eq:fW}
{{\mathfrak{W}}}_{{\mathcal{E}}}=\{ (\tfrac 1p, \tfrac \alpha p)\in [0,1]\times \mathbb{R} \;: \; M_{{\mathcal{E}}} \text{ is bounded on } L^p(w_\alpha) \}.
\end{equation}
To describe this let $(\nu^\sharp)^{-1}:[\beta,\infty)\to [b_*,\infty)$ be the inverse of the restriction of $\nu^\sharp$ to $[b_*,\infty)$ and let for $p\ge p_\beta
=1+\frac{\beta}{d-1}$
\begin{equation} \begin{aligned}
L(p) &=(d-1)(p-2)-
(\nu^\sharp)^{-1} \big((d-1)(p-1) \big),
\\U(p)&= (d-1)(p-1)-\beta.
\end{aligned}
\end{equation}
Then for $d\ge 2$ and nonempty ${{\mathcal{E}}}$
\begin{equation}\label{eq:typesetexplicit} \overline{{{\mathfrak{W}}}_{{\mathcal{E}}}} = \{ \big(\tfrac 1p,\tfrac\alpha p\big )\in [0,1]\times \mathbb{R}\,:\, p\ge p_\beta,\,\, L(p)\le \alpha\le U(p)\}.
\end{equation}
By the property $\nu^\sharp_{{\mathcal{E}}}(b)=b$ for $b\ge \gamma$ we see that $L(p)=1-d$ for $p>1+\frac{\gamma}{d-1}$. This is consistent with Theorem \ref{thm:newfullga}

\subsection*{Two necessary conditions} Let $E\subset [1,2]$. Testing $M_E$ on the characteristic function of
$\{y: \delta/2\le |y|\le \delta\}$, for small $\delta$, leads to the lower bound
\begin{equation}\label{lowerboundU(p)}
\|M_E\|_{L^p(w_\alpha) \to L^p(w_\alpha)}^p {\gtrsim} \sup_{0<\delta<1} N(E,\delta)\delta^{(d-1)(p-1) -\alpha }
\end{equation}
and thus to the necessary condition $\alpha\le U(p)$ for $L^p(w_\alpha)$-boundedness.
Another necessary condition involves the quantity
\begin{equation} \label{eq:Lambdadef}
{\Lambda}^E_{k,j} (p,\alpha)= 2^{-j(d-1)(p-1)} \sup_{\substack{I\subset [1,2] \\ |I|\approx 2^{-k} } } N(E\cap I,2^{-j} ) 2^{-k((d-1)( 2-p)+\alpha) }
\end{equation}
Indeed we have from \cite{FraccaroliRoosSeeger}
\begin{equation} \label{eq:lowerbdL}
\|M_E\|_{L^p(w_\alpha) \to L^p(w_\alpha)}^p{\gtrsim} \sup_{j\ge k\ge 0} \Lambda_{k,j}^E(p,\alpha)
\end{equation}
which leads to the necessary condition $\alpha\ge L(p)$.

One might ask whether for $p\ge 1+\frac\beta{d-1}$ the condition \[ \sup_{0<\delta<1} N(E,\delta)\delta^{(d-1)(p-1) -\alpha}<\infty \] is also sufficient for $L^p(w_\alpha)$-boundedness when $\alpha=U(p)$ but this is open (in particular for $\alpha=0$, $p=1+\frac{\beta}{d-1}$).
Regarding the lower boundary of ${{\mathfrak{W}}}_E$ one may also ask the question when
the condition
$\sup_{j\ge k\ge 0} \Lambda_{k,j}^E(p,\alpha)<\infty $ is enough for $L^p(w_\alpha)$ boundedness when $\alpha= L(p)$. This is is mostly unknown; indeed we shall see that there are instances when this condition holds with $\alpha=L(p)$ but $L^p(w_\alpha)$ boundedness fails (see Theorem \ref{thm:sequences} below for the case $\beta<1$, $p=\frac{d}{d-1}$ and $\alpha=1-d$).

In view of the incomplete knowledge of endpoint bounds for general $E$ we make a case study of a model case
for which the results in the unweighted case are complete. Namely we consider
for $\mu>0$ the sequence set
\begin{equation} \label{eq:defEmu}E^\mu=\{ 1+ n^{-\mu}: n\in {{\mathbb{N}}}\}, \end{equation}
for which $N(E,\delta)\approx \delta^{-1/(\mu+1)}$ and
$\sup_{j\ge k\ge 0} \Lambda_{k,j}^E(p,\alpha)<\infty $ holds.
Thus $\beta(E^\mu)= \frac{1}{\mu+1}$, moreover $\gamma(E^\mu)=1$, and
\[\nu_{E^\mu}^\sharp (b)= \frac{\mu}{\mu+1} b+ \frac 1{\mu+1} \qquad \text{ for $0\le b\le 1$.}
\] The corresponding functions $U$ and $L$ are given by
\[ U(p)= (d-1)(p-1)-\frac 1{\mu+1}\] and
\begin{equation}
L(p) =\begin{cases} 1-d+ \frac{d-(d-1)p}{\mu} &\text{ for } 1+\frac{1}{(\mu+1)(d-1) }\le p\le \frac{d}{d-1}
\\
1-d &\text{ for } p\ge \frac d{d-1}
\end{cases}
\end{equation}
For $p_\beta= p(\mu)=1+\frac 1{(\mu+1)(d-1) }$ we have $L(p(\mu))= 1-d+\beta= 1-d+\frac{1}{\mu+1}$.

\begin{thm}\label{thm:sequences}
Let $d\ge 2$ and $p(\mu)= 1+ \frac{1}{(\mu+1)(d-1)} $.
For $p(\mu) <p<\infty$ the operator ${{\mathcal{M}}}_{E^\mu} $ is bounded on $L^{p} (w_\alpha)$ if and only if
\begin{align*}
& \frac{d}{d-1}<p<\infty, && 1-d\le \alpha\le (d-1)(p-1)-\frac{1}{\mu+1}
\\
\text{ or }\, &p= p_d:=\frac{d}{d-1}, &&1-d< \alpha\le (d-1)(p_d-1)-\frac{1}{\mu+1}
\\
\text{ or }\,
&p(\mu)<p<\frac{d}{d-1}, && 1-d +\frac{d-(d-1)p}{\mu}\le \alpha \le (d-1)(p-1)-\frac{1}{\mu+1}.
\end{align*}

\end{thm}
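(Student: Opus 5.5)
The proof splits into the sufficiency statements, the three ranges of $p$, and the sharpness statements, with the two endpoints $\alpha=U(p)$ and $p=\frac d{d-1}$ handled separately.

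\emph{Interior range.} By \eqref{eq:typesetexplicit} and the computation of $L$ and $U$ for $E^\mu$, the closed type set is exactly the region in the statement. Interior points should then follow from the boundedness results of \cite{FraccaroliRoosSeeger}, by interpolation with change of measure (Stein--Weiss) between points of $\mathfrak W_{E^\mu}$. So the real work is on the boundary segments.

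\emph{Sufficiency on the boundary.} There are three pieces.
\begin{itemize}
\item For $p>\frac d{d-1}$ and $\alpha=1-d$, Theorem \ref{thm:newfullga} applies directly, since $\gamma(E^\mu)=1$.
\item For the upper endpoint $\alpha=U(p)$, $p>p(\mu)$, I would use the special structure of $E^\mu$. Decompose $E^\mu$ dyadically near $1$: $E^\mu\cap[1+2^{-k-1},1+2^{-k}]$ has about $2^{k/\mu}$ points with gaps about $2^{-k(\mu+1)/\mu}$. Split $f$ into Littlewood--Paley pieces at frequency $2^j$ and the weight into dyadic annuli $|x|\approx 2^{-l}$. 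For each piece, bound $\sup_t$ over the $\min(2^{k/\mu},2^{j-k})$ relevant points by an $\ell^p$ sum. Use the standard $L^p$ decay $2^{-j(d-1)(1-1/p)}$ for $A_t P_j$, together with the localization of the weight, which costs $2^{l\alpha}$ versus a kernel concentrated near spheres of radius $\approx1$. I expect the resulting double geometric sum in $(k,j)$ to converge at $\alpha=U(p)$ because $N(E^\mu,\delta)\approx\delta^{-\beta}$ exactly, with no logarithm. The sum is only borderline in one parameter, and I would handle that with a Bourgain-type interpolation trick, the restricted weak type summation of \cite{STW-jussieu}, upgraded to strong type by real interpolation along the segment.
\item For the lower segment $L(p)=1-d+\frac{d-(d-1)p}{\mu}$, $p(\mu)<p<\frac d{d-1}$, I would repeat the scheme using $\Lambda^{E}_{k,j}(p,L(p))\lesssim1$ uniformly. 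Since $\nu^\sharp$ is linear there, the sum is geometric except along one direction, and I would again close it by a restricted weak type plus interpolation argument between the two endpoints of the segment.
\end{itemize}

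\emph{Necessity.} The conditions $\alpha\le U(p)$ and $\alpha\ge L(p)$ come from \eqref{lowerboundU(p)} and \eqref{eq:lowerbdL}. Failure for $p\le p(\mu)$ is already known from the unweighted theory and \eqref{eq:typesetexplicit}.

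\emph{Main obstacle: the endpoint $p=\frac d{d-1}$, $\alpha=1-d$.} Here I must show unboundedness even though $\sup\Lambda_{k,j}<\infty$. I would test on $f=\chi_{\{|y|\le\delta\}}$, or a sum of such bumps, and exploit that $E^\mu$ accumulates at $1$. For $t\in E^\mu$ with $t-1\approx2^{-k}$, $A_tf(x)\gtrsim\delta^{d-1}$ on the shell $\bigl||x|-t\bigr|\le\delta$. Compared with \eqref{eq:lowerbdL}, the gain must be a logarithm: the scales $2^{-k}$, $\delta\le2^{-k}\le1$, should each contribute comparably at this endpoint. A single annulus gives only $O(1)$, so I would superpose functions $f=\sum_k c_k f_k$ adapted to each scale. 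Each $f_k$ is supported near the origin, with mass arranged so that $\sum_k\|f_k\|^p_{L^p(w_{1-d})}\approx\sum_k c_k^p$, while $\mathcal M_{E^\mu}f\gtrsim\sum_k c_k g_k$ with essentially disjoint $g_k$. Taking $c_k$ in $\ell^p$ but not in a slightly smaller space then yields a divergent ratio of order $(\log\delta^{-1})^{1-1/p}$. Checking that the cross terms do not cancel and that the weight $|x|^{1-d}$ exactly balances each scale is the step I expect to be delicate.
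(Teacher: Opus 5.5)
\textbf{Main gap: the counterexample at $p=\frac d{d-1}$, $\alpha=1-d$.} Your construction here cannot work: the test functions are in the wrong configuration, and the superposition mechanism cannot produce divergence.

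Inputs supported near the origin can never show unboundedness at $\alpha=1-d$. If $f$ vanishes outside $\{|y|\le\frac14\}$, then for $t\in[1,2]$ the averages $A_tf$ live on $\{\frac34\le|x|\le\frac94\}$, where $w_{1-d}\approx1$. So $\|\mathcal M_{E^\mu}f\|_{L^p(w_{1-d})}\lesssim\|f\|_p\le\|f\|_{L^p(w_{1-d})}$ by the unweighted bound; this is \eqref{eq:lowconst}. The test $\chi_{\{|y|\le\delta\}}$ with outputs on shells of radius $\approx1$ is the one behind \eqref{lowerboundU(p)}; it probes $U(p)$, not $L(p)$.

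The superposition also fails on its own terms. Suppose $\|f\|^p\approx\sum_kc_k^p$ and $\mathcal M_{E^\mu}f\gtrsim\sum_kc_kg_k$ with essentially disjoint $g_k$ of norm $\lesssim1$ (your single-scale $O(1)$). Then the lower bound $\sum_kc_k^p\|g_k\|^p\lesssim\|f\|^p$ never beats the norm of $f$, whatever $(c_k)$ you choose. Divergence needs the reverse: pieces disjoint in $y$, so input norms add in $\ell^p$, whose averages pile up at the same points $x$ for the same $t$, so outputs add in $\ell^1$.

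The paper does this at a single output scale.
\begin{itemize}
\item The outputs are $x\in V_k$, $|x|\approx2^{-k}$, where $w_{1-d}\approx2^{k(d-1)}$.
\item The inputs near $(0,\dots,0,1)$ are $L^p$-normalized indicators $g_j$ of annular plates $\{|y_d-1|\lesssim2^{-j},\ |y'|\approx2^{k-j}\}$. There is one for each of the $\approx k$ integers $j$ between $2k$ and $\frac{\mu+1}{\mu}k$ (this needs $\mu<1$; otherwise curved shell pieces are used).
\item Consecutive points of $E^\mu$ near $1+2^{-k}$ are $\approx2^{-k\frac{\mu+1}{\mu}}\ll2^{-j}$ apart. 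So one $t_n$ with $|t_n-1+x_d|\le2^{-k\frac{\mu+1}{\mu}}$ serves every $j$.
\item Exactly at $p=\frac d{d-1}$, each plate contributes $A_{t_n}g_j(x)\gtrsim2^{k\frac{d-1}{d}}$, independently of $j$.
\end{itemize}
Hence $\|\mathcal M_{E^\mu}f_k\|_{L^p(w_{1-d})}\gtrsim k$, while $\|f_k\|_{L^p(w_{1-d})}\lesssim k^{1/p}$, giving a ratio $k^{1-1/p}=k^{1/d}$. Your predicted rate is right, but it comes from this mechanism. In terms of \eqref{eq:Lambdabounds}, $\Lambda^{E^\mu}_{k,j}(p_d,1-d)\approx1$ for all $k\le j\le\frac{\mu+1}{\mu}k$. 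The logarithm comes from superposing in $j$ at fixed $k$, not over output scales.

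\textbf{Sufficiency on the boundary: two missing ideas.}

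\emph{Upper boundary.} The decay $2^{-j(d-1)(1-\frac1p)}$ you quote holds only for $p\le2$. For $p>2$ the correct rate is $2^{-j(d-1)/p}$, and with it your counting at $\alpha=U(p)$ produces a factor growing like $2^{j(d-1)(p-2)/p}$. The paper instead interpolates the $p=2$ bound, by an analytic family in the exponent of $|y|$, with an $L^\infty$ bound for each dyadic input annulus: $A_t(f|\cdot|^{1-d}\chi_m)=O(\|f\|_\infty)$ for $|x|\sim1$, since the sphere meets $\{|y|\approx2^{-m}\}$ in area $\lesssim2^{-m(d-1)}$. On the other hand, there is no borderline parameter. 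Index input annuli by $|y|\approx2^{-(l+n+\mathfrak z)}$ and frequencies by $2^{l+n}$. Then the sum over $l$ is an $\ell^p$ sum over disjoint pieces of $f$, and the sums in $n,\mathfrak z$ are geometric, so no restricted weak type step is needed.

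\emph{Lower segment.} Here $p<\frac d{d-1}\le2$, and the real obstruction is that $\bigl(\sum_k\|L_{j_n(k)}f\|_p^p\bigr)^{1/p}$ is not controlled by $\|f\|_p$ for $p<2$. So the uniform bounds from Lemma \ref{lem:k<j} cannot be summed in $k$ directly. Interpolating between ``the two endpoints of the segment'' is unavailable: strong type fails at $p=\frac d{d-1}$ and is open at $p=p(\mu)$. The missing idea is the bootstrap.
\begin{itemize}
\item By positivity there is an $L^p(\ell^\infty)$ bound with the a priori constant $\mathscr C(\ell)$.
\item Interpolating with the $L^p(\ell^p)$ bound gives an $L^p(\ell^2)$ bound with factor $\mathscr C(\ell)^{1-p/2}$ and exponential decay in $n$.
\item Weighted Littlewood--Paley theory then closes the estimate.
\item Finally, $\mathscr C(\ell)\le C(1+\varepsilon^{-1}\mathscr C(\ell)^{1-p/2})$ forces $\mathscr C(\ell)=O(1)$.
\end{itemize}
Your remaining reductions match the paper: the interior, $\alpha=1-d$ for $p>\frac d{d-1}$ via Theorem \ref{thm:newfullga}, and necessity via \eqref{lowerboundU(p)} and \eqref{eq:lowerbdL}.
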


For $p=p_d$ note that we have boundedness for $(d-1)(p_d-1)-\frac{1}{\mu+1}= \frac{\mu}{\mu+1}$, and unboundedness for $\alpha=1-d$; the latter corresponds to the corner on the line $\frac{\alpha}{(d-1)p}=-\frac{d-1}{d}$ in Figure \ref{fig2}.
For $p<p(\mu)$ the operator $M_{E^\mu}$ is not bounded on any $L^p(w_\alpha)$. For $p=p(\mu)$, $M_{E^\mu}$ is not bounded on $L^p(w_\alpha)$, if $\alpha<1-d+(\mu+1)^{-1}$, by \cite{FraccaroliRoosSeeger}. The case $\alpha=0$ of (ii) was proved in \cite{STW-jussieu}. The behavior of $M_{E^\mu}$ on $L^{p(\mu)} (w_\alpha)$ remains open for $\alpha=1-d+(\mu+1)^{-1}$.
\begin{figure}
\begin{tikzpicture}[scale=.5]
\begin{axis}[
width=11cm,
height=11cm,
axis lines=middle,
axis line style={-Stealth},
axis equal image,
xmin=0, xmax=1.6,
ymin=-1.4, ymax=0.7,
xtick={1,1.5},
xticklabels=\empty,
ytick={-1,-1.3},
yticklabel=\empty,
grid=major,
grid style={dashed,gray!50},
xlabel={$\frac{1}{p}$},
ylabel={$\frac{\alpha}{(d-1)p}$},
xlabel style={at={(ticklabel* cs:1)}, anchor=west},
ylabel style={at={(ticklabel* cs:1)}, anchor=south},
clip=false
]

\addplot[
very thick,
domain=0:0.985,
samples=2
]{-x};

\addplot[
very thick,
domain=0:1.5,
samples=150
]{0.5 -x/3};

\addplot[
very thick,
domain=1.015:1.485,
samples=2
]{(-3*x/5)-(2/5)};

\addplot[
only marks,
mark=*,
mark size=2.5pt,
] coordinates {(1.5,0)};

\addplot[
only marks, very thick,
mark=*,
mark size=2.5pt, black
] coordinates {(1,-1)};

\addplot[
only marks,
mark=o,
mark size=2.5pt,
] coordinates {(1.5,-1.3)};

\addplot[
very thick
] coordinates {
(1.5,0)
(1.5,-1.285)
};

\addplot[
very thick
] coordinates {
(0,0)
(0,0.5)
};

Label\node[black] at (axis cs:1.4,-0.1) {$\frac{1}{p(\mu)}$};
Label\node[black] at (axis cs:0.9,-0.1) {$\frac{d-1}{d}$};
Label\node[black] at (axis cs:-0.05,0.5) {$1$};
Label\node[black] at (axis cs:-0.13,-1) {$-\frac{d-1}{d}$};
Label\node[black] at (axis cs:-0.17,-1.3) {$\frac{L(p(\mu))}{(d-1)p(\mu)}$};

\draw[
fill=black,
fill opacity=0.1,
draw=none
]
(axis cs:0,0)
-- (axis cs:0,0.5)
-- (axis cs:1.5,0)
-- (axis cs:1.5,-1.3)
-- (axis cs:1,-1)
-- cycle;

\end{axis}
\end{tikzpicture}
\caption{Sharp type set ${{\mathfrak{W}}}_{E^\mu}$, vertically scaled by $(d-1)^{-1}$.
}
\label{fig2}\end{figure}

For the global analogue the $L^p(w_\alpha)$ boundedness fails for $\alpha=U(p)$.
\begin{thm}
Let ${{\mathcal{E}}}^\mu=\cup_{k\in {{\mathbb{Z}}}}2^kE_\mu$ and $p(\mu)<p<\infty$. Then $M_{{{\mathcal{E}}}^\mu}$ is bounded on $L^p(w_\alpha)$ if and only if $\alpha <(d-1)(p-1)-(\mu+1)^{-1}$ and $M_{E^\mu}$ is bounded on $L^p(w_\alpha)$.
\end{thm}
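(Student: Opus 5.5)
We prove the two directions separately. \emph{Necessity.} Since $E^\mu\subset{{\mathcal{E}}}^\mu$, we have $M_{E^\mu}f\le M_{{{\mathcal{E}}}^\mu}f$ pointwise. Hence boundedness of $M_{{{\mathcal{E}}}^\mu}$ on $L^p(w_\alpha)$ implies boundedness of $M_{E^\mu}$, which by Theorem \ref{thm:sequences} already forces $\alpha\le (d-1)(p-1)-\frac1{\mu+1}$. It remains to exclude the endpoint $\alpha=U(p)$. For this we refine the test behind \eqref{lowerboundU(p)} by summing over scales. We fix a large $K$ and take $f=\sum_{k=1}^{K} c_k\,\mathbbm 1_{\{2^{-k-1}\delta_k\le |y|\le 2^{-k}\delta_k\}}$, with suitably chosen $\delta_k\le 1$ and $c_k$. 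Equivalently, by dilation invariance of ${{\mathcal{E}}}^\mu$, we use the fact that the weight $w_\alpha$ is homogeneous. Each piece $f_k$ produces $M_{{{\mathcal{E}}}^\mu}f_k\gtrsim c_k\,(\delta_k)^{d-1}$ on a union of $\approx N(E^\mu,\delta_k)$ shells of width $2^{-k}\delta_k$ and radius $\approx 2^{-k}$. It also produces the same on all the dilated families of radii $2^{l-k}$, $l\ge 0$, since ${{\mathcal{E}}}^\mu$ contains $2^lE^\mu$. At the critical exponent $\alpha=U(p)$, each scale $l$ contributes a comparable amount to $\|M f_k\|^p_{L^p(w_\alpha)}$, while $\|f_k\|_{L^p(w_\alpha)}$ does not grow. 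Summing over the roughly $k$ available scales, or taking $\delta$-dependent truncations, gives a logarithmic divergence. Thus the plan is to verify that the contributions of the $2^l$-dilates add without loss: they live on disjoint dyadic annuli $|x|\approx 2^{l}$, so the $L^p$ norms add. Then $\|M\|^p\gtrsim \log(1/\delta)$, with homogeneity making the exponent balance exactly when $\alpha=U(p)$. This contradicts boundedness.

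\emph{Sufficiency.} Assume $\alpha<U(p)$ and that $M_{E^\mu}$ is bounded on $L^p(w_\alpha)$. We decompose $f=\sum_j f\mathbbm 1_{\{2^j\le|y|<2^{j+1}\}}$ and $M_{{{\mathcal{E}}}^\mu}\le \sup_k M_{2^kE^\mu}$. For the part where $|x|\approx 2^{m}$ and $t\approx 2^k$, we distinguish three ranges: (a) $|k-m|\le C$; (b) $k<m-C$; (c) $k>m+C$. In range (a), $A_tf$ depends only on $f$ near $|y|\approx 2^m$. Using scaling and the hypothesis on $M_{E^\mu}$ with $w_\alpha\approx 2^{m\alpha}$ locally, this part is essentially handled by a Littlewood--Paley type $\ell^p$ sum over $m$. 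Here some care is needed, since the supremum over $k$ with $|k-m|\le C$ is finite, so this is fine. In range (b), the weight is essentially constant on the relevant region, and the operator is dominated by the (unweighted) lacunary-dilate maximal function localized to annuli. We bound it via the Hardy--Littlewood maximal function, which is bounded on $L^p(w_\alpha)$ since $w_\alpha\in A_p$ for $\alpha<U(p)\le (d-1)(p-1)$; note also $\alpha\ge 1-d>-d$. In range (c), $|x|\ll t$, so $A_tf(x)$ averages $f$ over the sphere of radius $t$ about $x$, which lies near $|y|\approx t$. We estimate the contribution from $|y|\approx 2^k$ with an $L^1$-type bound against the small region $|x|\le 2^{k-C}$. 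This yields geometric decay in $k-m$ precisely when $\alpha> -d$ together with the radial-type bound. The strict inequality $\alpha<U(p)$ is needed in the summation over $k$ in range (c) and in the interaction terms, where one gains $2^{-(k-m)\epsilon}$ with $\epsilon=U(p)-\alpha$, estimated as in \eqref{lowerboundU(p)} but summed.

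The main obstacle is range (c) together with the borderline counting. There one must use $N(E^\mu,\delta)\approx\delta^{-1/(\mu+1)}$ and show the sum over scales is geometric with ratio $2^{-(U(p)-\alpha)}$. I would first try to reproduce the proof of \cite{FraccaroliRoosSeeger} for global sets, since the interior of ${{\mathfrak{W}}}$ is handled there, checking that only the upper constraint becomes strict in the global case.
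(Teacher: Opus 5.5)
Your necessity argument is correct and is essentially the paper's. The paper takes the single function $f=\mathbbm 1_{B(0,1)}$. For each $k\ge 0$ the shells $\{x:\big||x|-2^k(1+n^{-\mu})\big|<\tfrac1{100}\}$, $1\le n\lesssim 2^{k/(\mu+1)}$, are disjoint and carry $M_{\mathcal{E}^\mu}f\gtrsim 2^{-k(d-1)}$. So the annulus $|x|\approx 2^k$ contributes $\gtrsim 2^{k(\alpha-U(p))}$ to $\|M_{\mathcal{E}^\mu}f\|^p_{L^p(w_\alpha)}$, and the sum diverges at $\alpha=U(p)$. Your superposition of pieces $f_k$ and the cap at ``roughly $k$ scales'' are unnecessary, since one piece already sees infinitely many dilation scales, each contributing $\approx 1$. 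Also, the values on the dilated shells are not ``the same'': they decay like $2^{-l(d-1)}$, and the number and measure of the shells together with the weight exactly compensate for this.

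The sufficiency sketch has a genuine gap. In range (a) you assert that for $|x|\approx t\approx 2^m$ the average $A_tf(x)$ only involves $f$ near $|y|\approx 2^m$. This is false. For $|x|\approx t$ the sphere $x+tS^{d-1}$ passes arbitrarily close to the origin, so all pieces $f_j=f\mathbbm 1_{\{2^j\le|y|<2^{j+1}\}}$ with $j<m$ enter, and on their supports $w_\alpha\approx 2^{j\alpha}$, not $2^{m\alpha}$. This is exactly the configuration of your own counterexample, and it is the only place where $\alpha<U(p)$ is needed. What is missing is an off-diagonal decay estimate
\[
\Big(\int_{|x|\approx 2^m}\sup_{|k-m|\le C}\,\sup_{t\in 2^kE^\mu}|A_tf_j(x)|^p\,w_\alpha(x)\,dx\Big)^{1/p}\lesssim 2^{-(m-j)\frac{\alpha'-\alpha}{p}}\|f_j\|_{L^p(w_\alpha)},\quad j<m-C'.
\]
After scaling to $m=0$, this follows from boundedness of $M_{E^\mu}$ on $L^p(w_{\alpha'})$ for some $\alpha'\in(\alpha,U(p)]$. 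Indeed $w_{\alpha'}\approx w_\alpha\approx 1$ on $|x|\approx 1$, while $\|f_j\|_{L^p(w_{\alpha'})}\approx 2^{j(\alpha'-\alpha)/p}\|f_j\|_{L^p(w_\alpha)}$. You can take $\alpha'=U(p)$ by Theorem~\ref{thm:sequences}, or $\alpha'=0$ (the unweighted bound) if $\alpha<0$. The latter is the Duoandikoetxea--Seijo type reduction the paper alludes to; the paper itself writes out only the counterexample. Young's inequality in $(j,m)$ then sums these terms, and such an $\alpha'$ exists precisely because $\alpha<U(p)$. Counting ``as in \eqref{lowerboundU(p)}'' cannot replace this step: that is a lower bound, and these maximal averages admit no $L^1$-type upper bound.

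The other ranges are also misdescribed. In (c), and for the terms $|j-m|\le C$ in (a), no decay is needed, and in (c) none holds at $\alpha=L(p)$ (e.g.\ $\alpha=1-d$). Each dyadic piece of $f$ interacts with only boundedly many dilation scales, so the hypothesis on $M_{E^\mu}$, rescaled via the homogeneity of $w_\alpha$, plus an $\ell^p$-sum over pieces suffices. The strict inequality plays no role there. In (b), $\bigcup_k 2^kE^\mu$ is not lacunary, and spherical means are not pointwise dominated by the Hardy--Littlewood maximal function. Since $w_\alpha\approx 2^{m\alpha}$ at both $x$ and $y$ there, you should instead use the unweighted $L^p$ bound for $M_{\mathcal{E}^\mu}$, valid for $p>p(\mu)$ by \cite{SeegerWaingerWright1995}.
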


\subsection*{Statement on use of AI}
No AI tools were used for the mathematics or the writing of this paper. Prior to this project,
GPT 5.5 had been used to assist with a previous version of Figure 1.

\subsection*{Acknowledgements}
M.F. was supported in part by an AMS-Simons travel grant. J.R. was supported in part by NSF grant DMS-2555784 and a Simons Foundation grant. A.S. was supported in part by NSF grant DMS-2348797.

\noindent {\em Remark.} While we were finishing this manuscript, an independent preprint by Basak, Choudhary, Shen and Spector \cite{BCSS26} appeared on the arXiv with overlap in
the main results, in particular
Theorem \ref{thm:newfull}.

\section{Upper bounds}
In this section we prove Theorem \ref{thm:newfullga}.
In the case $d=2$, $\gamma=1$ the claim follows from Theorem 1.1 (1) of \cite{LeeJ-weighted}. Thus we may exclude this case in the following.
We will consider the case $E\subset [1,2].$
Let $B_k=\{x:|x|\le 2^{-k}\}$ and $B_k^\complement$ its complement.
For $\ell\ge 4$
let ${{\mathscr C}}(\ell)$ be the infimum over all constants $C$ such that
\[ \big\|\sup_{t\in E} |A_t f | \big \|_{L^p(\Bellc,w_{1-d})} \le C \|f\|_{L^p(w_{1-d})}\]
for all $f\in L^p(w_{1-d})$ . We shall first show that ${{\mathscr C}}(\ell)<\infty$ for all $\ell>0$, and subsequently show that
${{\mathscr C}}(\ell)=O(1)$ as $\ell\to\infty$.

For continuous $f$,
$\sup_{\ell \geq 0} {{\mathbbm 1}} _{\Bellc}\sup_{t\in E} |A_t f (x)| = \sup_{t\in E} |A_t f(x)|$. Observe that continuous $L^p(w_\alpha)$ functions are dense in $L^p(w_\alpha)$ if $\alpha>-d$. Using the monotone convergence theorem we see that it suffices to prove ${{\mathscr C}}(\ell)= O(1)$ as $\ell\to \infty$.

Starting out as in \cite{FraccaroliRoosSeeger}
we split
\begin{equation}
\label{eq:lowhigh}f=f_{\mathrm{low}} + {f_{\mathrm{mid}}} + f_{\mathrm{high}},
\end{equation} where $f_{\mathrm{low}} $ lives on $ \{y:|y|<1/4\}$, ${f_{\mathrm{mid}}}$ lives on $\{y: 1/2\le |y|\le 4\}$, and
$f_{\mathrm{high}} $ lives on $ \{y:|y|>4\}$.
The $L^p(w_\alpha)$ estimates
\begin{align*}
\big\|\sup_{t\in E} |A_t f_{\mathrm{low}} | \big \|_{L^p(w_\alpha)} {\lesssim} \|f\|_{L^p(w_\alpha)} \\
\big\|\sup_{t\in E} |A_t f_{\mathrm{high}} | \big \|_{L^p(w_\alpha)} {\lesssim} \|f\|_{L^p(w_\alpha)}
\end{align*}
follow from the unweighted bounds and are valid for $p> 1+\frac\beta{d-1}$ (and nothing needs to be proved since $\beta\le \gamma$). See \cite{FraccaroliRoosSeeger}. By the same argument
\[ \Big( \int_{|x|\ge 2^{-4}} \sup_{t\in E} |A_t {f_{\mathrm{mid}}} (x) |^p w_\alpha(x) dx\Big)^{1/p} {\lesssim} \|f\|_{L^p(w_\alpha)}. \]

In particular we get, for some constant $C_0$ (depending on the unweighted $L^p$ estimates)
\begin{align}\label{eq:lowconst}
\big\|\sup_{t\in E} |A_t f_{\mathrm{low}} | \big \|_{L^p(w_{1-d})} \le C_0 \|f\|_{L^p(w_{1-d})} \\
\label{eq:highconst}
\big\|\sup_{t\in E} |A_t f_{\mathrm{high}} | \big \|_{L^p(w_{1-d})} \le C_0 \|f\|_{L^p(w_{1-d})}
\\
\label{eq:midconst}
\Big( \int_{|x|\ge 2^{-4}} \sup_{t\in E } |A_t {f_{\mathrm{mid}}} (x) |^p w_{1-d} (x) dx\Big)^{1/p} \le C_0 \|f\|_{L^p(w_{1-d})}
\end{align}
with $C_0$ independent of $\ell$.

It is our task to show that
\begin{equation}\label{eq:goal}
\Big( \int_{2^{-\ell-1}\le |x|\le 2^{-4}} \sup_{t\in E } |A_t {f_{\mathrm{mid}}} (x) |^p w_{1-d} (x) dx\Big)^{1/p} {\lesssim} \|f\|_{L^p(w_{1-d})}
\end{equation} with an implicit constant uniform in $\ell$.

We first note that by the unweighted estimates
for $p>1+\frac{\beta}{d-1}$
\begin{align*}
&\Big( \int_{2^{-\ell-1}\le |x|\le 2^{-4}} \sup_{t\in E } |A_t {f_{\mathrm{mid}}} (x) |^p w_{1-d} (x) \,dx\Big)^{1/p} \\&{\lesssim} 2^{\ell(d-1)/p} \big\| \sup_{t\in E } |A_t {f_{\mathrm{mid}}} |\big\|_{p}
{\lesssim} 2^{\ell(d-1)/p} \|{f_{\mathrm{mid}}}\|_p {\lesssim} 2^{\ell(d-1)/p} \|f\|_{L^p(w_{1-d})}.
\end{align*}
This estimate, together with \eqref{eq:lowconst}, \eqref{eq:highconst} and \eqref{eq:midconst} implies that
${{\mathscr C}}(\ell)$ is finite for all $\ell>0$, indeed ${{\mathscr C}}(\ell) {\lesssim} 2^{\ell(d-1)/p}$.

We proceed to show \eqref{eq:goal} which then in turn implies ${{\mathscr C}}(\ell)=O(1)$. We use a spatially localized Littlewood-Paley decomposition, using the Littlewood-Paley lemma with a large number of vanishing moments in \cite[Lemma 2.1]{SeegerTao}, to ensure some reproducing property.\footnote{This is not essential for the argument but it is convenient as it prevents certain error terms from showing up.} In this lemma let $M$ large (we need $M\gg \frac{d-1}{2}$). One constructs radial $C^\infty$ functions $\psi_0, \psi_1, \widetilde \psi_0, \widetilde \psi_1$, all supported in $\{x:|x|\le \frac 1{200}\}$ such that $\widehat\psi_0(\xi)-1=O(|\xi|^M)$, $\widehat{\widetilde \psi_0}(\xi)-1=O(|\xi|^M)$, and $\widehat\psi_1(\xi)=O(|\xi|^M)$, $\widehat{\widetilde \psi_1}(\xi)
=O(|\xi|^M)$. Moreover, with $\psi_k = 2^{(k-1) d}\psi_1 (2^{k-1} \cdot)$,
$\widetilde \psi_k = 2^{(k-1) d}\widetilde \psi_1 (2^{k-1} \cdot)$, we have
\begin{equation}\label{eq:resolotionofid} f= \widetilde\psi_0*\psi_0*f+ \sum_{k=1}^\infty \widetilde \psi_k*\psi_k*f .\end{equation}
Moreover, for $\ell\ge 0$
\begin{equation}\label{eq:telescoping}\sum_{k=0}^\ell \widetilde\psi_k* \psi_k= 2^{\ell d}\Phi_0(2^\ell \cdot)\end{equation}
with $\Phi_0\in C^\infty_c$ and $\int \Phi_0=1$.
We set $L_j f=\psi_j*f$ and $A_t^j f(x) =\sigma_t*\widetilde \psi_j*f$,
so that $$A_t f=\sum_{j=0}^\infty A^j_t L_j f .$$
We shall need some elementary estimates for $A_t^j {f_{\mathrm{mid}}}(x)$ when evaluated for $|x|\approx 2^{-k}$.

\begin{lemma}\label{lem:k>j} For $0\le j<k$ we have
\[ \label{jk-ineq-j<k} \Big(\int_{ |x|\le 2^{-k}} \sup_{t\in [1,2]} |A_t^j g(x)|^p w_{1-d} (x) dx\Big)^{\frac 1p}
\le C 2^{(j-k)/p} \|g\|_p\,\] provided that $g$ is continuous and $1\le p<\infty$.
\end{lemma}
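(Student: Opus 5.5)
The idea is that for $j<k$ the function $A_t^j g$ is essentially constant on the ball $B_k$ at scale $2^{-j}\ge 2^{-k}$. So the weighted integral over $B_k$ can be controlled by a pointwise sup bound times the weighted measure of the ball. Since $w_{1-d}$ is locally integrable ($1-d>-d$), we have
\[
\int_{|x|\le 2^{-k}} |x|^{1-d}\,dx \approx 2^{-k}.
\]
It therefore suffices to show
\[
\sup_{|x|\le 2^{-k}}\sup_{t\in[1,2]}|A_t^j g(x)| \lesssim 2^{j/p}\|g\|_p .
\]
Raising to the $p$-th power and multiplying by $2^{-k}$ then gives $2^{(j-k)}\|g\|_p^p$, which is the claim.

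To get the pointwise bound, write $A_t^j g=(\sigma_t*\widetilde\psi_j)*g$ and apply Hölder:
\[
|A_t^j g(x)|\le \|\sigma_t*\widetilde\psi_j\|_{p'}\|g\|_p .
\]
The kernel $\sigma_t*\widetilde\psi_j$ has the following properties.
\begin{itemize}
\item It is supported in a $2^{-j}$-neighborhood of the sphere of radius $t$.
\item It is bounded by $C2^{j}$, since $\widetilde\psi_j$ is an $L^1$-normalized bump at scale $2^{-j}$ and $\sigma_t$ gives mass $\approx 2^{-j(d-1)}$ to a $2^{-j}$-ball.
\item It has $L^1$ norm $O(1)$.
\end{itemize}
Interpolating, $\|\sigma_t*\widetilde\psi_j\|_{p'}\lesssim (2^{j})^{1-1/p'}=2^{j/p}$, uniformly in $t\in[1,2]$ and in $x$. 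For $j=0$ the same holds trivially. Note that this step does not even use $|x|\le 2^{-k}$ or $j<k$ beyond producing the factor $2^{-k}$ from the weight. The restriction $j<k$ is only what makes the estimate useful, since it yields decay $2^{(j-k)/p}$.

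Combining the two bounds gives
\[
\Big(\int_{B_k}\sup_t|A_t^jg|^p w_{1-d}\Big)^{1/p}\le C\,2^{j/p}\|g\|_p\,(2^{-k})^{1/p}.
\]
Continuity of $g$ is used only to ensure that the supremum over $t$ is measurable. The case $p=1$ is covered with $p'=\infty$, using the $L^\infty$ bound $2^{j}$ directly.

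The only step needing care is the kernel bound $\|\sigma_t*\widetilde\psi_j\|_\infty\lesssim 2^j$. It follows from
\[
|\sigma_t*\widetilde\psi_j(y)|\le \|\widetilde\psi_j\|_\infty\,\sigma_t(B(y,2^{-j}/200))\lesssim 2^{jd}2^{-j(d-1)},
\]
using that $\widetilde\psi_j$ is supported in $\{|x|\le 2^{-j}/200\}$ and the surface measure of a small cap on a sphere of radius $t\in[1,2]$. No real obstacle is expected.
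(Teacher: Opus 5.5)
Your proposal is correct and uses the same ingredients as the paper: $\|K^j_t\|_1\lesssim 1$, $\|K^j_t\|_\infty\lesssim 2^j$, and $\int_{|x|\le 2^{-k}}|x|^{1-d}\,dx\approx 2^{-k}$. The one difference is that the paper proves the operator bounds at $p=1$ and $p=\infty$ and interpolates the (linearized) maximal operator, whereas you interpolate the kernel norms, $\|K^j_t\|_{p'}\le \|K^j_t\|_1^{1/p'}\|K^j_t\|_\infty^{1/p}\lesssim 2^{j/p}$, and apply H\"older pointwise, which slightly streamlines the argument.
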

A similar lemma is already in \cite{LeeJ-weighted} and proved for $p=2$ using a more elaborate argument based on spherical harmonics expansions. We give a shorter simple proof here.

\begin{proof}
We show the $L^p\to L^p(w_{1-d})$ inequality for $p=1$ and for $p=\infty$ (for continuous $g$).
For continuous $g$ the function $(x,t)\mapsto A_t g(x)$ is continuous on ${{\mathbb{R}}}^d\times (0,\infty)$.
The convolution kernels $K^j_t$ of $A ^j_t$ satisfy $\sup_{t\in [1,2]} \|K^j_t\|_1 {\lesssim} 1$, hence
\[ \sup_x \sup_{t\in [1,2]} |A^j_t g(x)| =\sup_{t\in [1,2]} \sup_x |K_t^j*g(x)|{\lesssim} \|g\|_\infty.\]

For $p=1$ we use $\|K^j_t\|_\infty=O(2^j)$ to bound
\begin{multline*}
\int_{|x|\le 2^{-k}} \sup_{t\in [1,2]} |A_t^j g(x)||x|^{1-d} dx
{\lesssim}
\int_{|x|\le 2^{-k}} |x|^{1-d} dx \sup_x
\sup_{t\in [1,2]} |A_t^j g(x)|
\\
{\lesssim} 2^{-k}
\sup_{t\in [1,2]} \sup_x|K^j_t* g(x)|
{\lesssim} 2^{j-k} \|g\|_1.
\end{multline*}
For intermediate $p$ the inequality follows by interpolation.
\end{proof}

The following lemma with its proof is taken from \cite{FraccaroliRoosSeeger} (similar arguments were also used in \cite{DuoandikoetxeaSeijo}).
\begin{lemma}\label{lem:k<j}
Let $1\le p\le 2$.
Let $E\subset[1,2]$ and
$B_k=\{x:|x|\le 2^{-k} \}$. Let, for $k\le j$,
\begin{equation} \label{eq:Lambdakjdef}
{\Lambda}^E_{k,j}(p,\alpha) = 2^{-j(d-1)(p-1)}
\sup_{\substack{I\subset [1,2] \\ |I|\approx 2^{-k} } }N(E\cap I,2^{-j} ) 2^{-k((d-1)( 2-p)+\alpha) }
\end{equation}
Then for $k\le j$.
\begin{equation} \label{jk-ineq} \Big(\int_{B_k\setminus B_{k+1}} \sup_{t\in E} |A_t^j g(x)|^p|x|^\alpha dx\Big)^{1/p} \\
\lesssim [{\Lambda}^E_{k,j}(p,\alpha) ]^{1/p} \|g\|_p
\end{equation}
\end{lemma}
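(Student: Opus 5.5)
We prove the estimate \eqref{jk-ineq} for $p=1$ and for $p=2$ and then interpolate. Since $\Lambda^E_{k,j}(p,\alpha)$ is a product of powers of $2^{-j}$, of $2^{-k}$ and of the covering numbers, the interpolation has to be arranged carefully. We fix $k\le j$ and partition $[1,2]$ into intervals $I$ of length $\approx 2^{-k}$. For each such $I$ we cover $E\cap I$ by $N(E\cap I,2^{-j})$ intervals $J$ of length $2^{-j}$. Because $\widetilde\psi_j$ localizes frequencies to $|\xi|\lesssim 2^j$, the function $t\mapsto A^j_tg(x)$ is essentially constant at scale $2^{-j}$. A Sobolev embedding in $t$ on each $J$ (the usual $\sup_{t\in J}|F(t)|^p\lesssim 2^{j}\int_J|F|^p+2^{j(1-p)}\int_J|F'|^p$, with $\partial_t A^j_t$ behaving like $2^j A^j_t$) therefore reduces the supremum over $E\cap I$ to a sum over $J$ of averages over $t\in \widetilde J$, a slight enlargement of $J$. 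The supremum over $E$ is bounded by the $\ell^p$ sum over $I$; at the end this costs only the sup over $I$, because by the next step $x\in B_k\setminus B_{k+1}$ interacts with $g$ only near the shell $\{|y|\in x+ tS^{d-1}\}$, i.e.\ with $g$ restricted essentially to the annulus $\{|y|\in I+O(2^{-k})\}$.

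The key geometric observation is the following. For $|x|\approx 2^{-k}$ and $t\in\widetilde J$, the kernel of $A^j_t$ is concentrated on the $2^{-j}$-neighborhood of the sphere $x+tS^{d-1}$, with height $\lesssim 2^j$. As $x$ ranges over $B_k\setminus B_{k+1}$ and $t\in\widetilde J$, these neighborhoods all lie in the annulus $\{y: ||y|-t_J|\lesssim 2^{-k}\}$.

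At $p=1$ we use Fubini. Since
\[
\int_{|x|\approx 2^{-k}} 2^j\mathbbm 1_{||x-y|-t|\le 2^{-j}}\,|x|^\alpha\,dx\lesssim 2^{-k(d-1+\alpha)}
\]
uniformly in $y$, we get
\[
\int_{B_k\setminus B_{k+1}}\sup_{t\in E\cap I}|A^j_tg|\,|x|^\alpha\, dx\lesssim N(E\cap I,2^{-j})\,2^{-k(d-1+\alpha)}\|g\|_1.
\]
This matches $\Lambda_{k,j}(1,\alpha)$, whose exponent is $-k((d-1)+\alpha)$ with no $j$-power.

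At $p=2$ we use instead an $L^2$ bound with orthogonality in $t$. The averages over $t\in\widetilde J$ give
\[
\sum_J 2^j\int_{\widetilde J}\int_{B_k\setminus B_{k+1}}|A^j_tg(x)|^2|x|^\alpha\,dx\,dt .
\]
We bound the inner integral by $2^{-k\alpha}\|A^j_tg\|_2^2\lesssim 2^{-k\alpha}2^{-j(d-1)}\|g\|_2^2$, using the decay $|\widehat\sigma(\xi)|\lesssim|\xi|^{-(d-1)/2}$ on the frequency support $|\xi|\approx 2^j$. For $j=0$ no decay is needed. This yields the factor $N(E\cap I,2^{-j})\,2^{-j(d-1)}2^{-k\alpha}$, which is exactly $\Lambda_{k,j}(2,\alpha)$.

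The main obstacle is the interpolation itself. The $p=1$ and $p=2$ bounds must be interpolated for a fixed family of $J$'s, i.e.\ for the linearized operator $g\mapsto (2^{j}\mathbbm 1_{\widetilde J}(t)A^j_tg(x))_J$ mapping into a mixed space $\ell^p_J L^p(dt\,|x|^\alpha dx)$. The weight $|x|^\alpha$ appears at both endpoints, so interpolation with a fixed weight is legitimate (Riesz--Thorin on $L^p(\mu)$). The resulting constant is then the geometric mean of the endpoint constants, which reproduces $\Lambda_{k,j}(p,\alpha)^{1/p}$ after taking the sup over $I$. The derivative term in the Sobolev step is handled identically, since $\partial_t$ of the kernel only costs a factor $2^j$, and this is compensated by the factor $2^{-j}$ that accompanies it.
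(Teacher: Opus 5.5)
Your proposal is correct and follows essentially the paper's proof. You reduce the supremum to a $2^{-j}$-net of $E\cap I$ via the fundamental theorem of calculus (your averaged Sobolev bound plays the role of the paper's pointwise estimate at the centers $t_J$). You get the $p=1$ bound from $\|K^j_t\|_\infty\lesssim 2^j$ and the $O(2^{-k(d-1)}2^{-j})$ measure of the shell--ball intersection, and the $p=2$ bound from the decay of $\widehat\sigma$. You then interpolate, and use that $g$ only matters on annuli $\{|y|\in I+O(2^{-k})\}$ of bounded overlap, so that only $\sup_I N(E\cap I,2^{-j})$ is paid.

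The differences are cosmetic. You interpolate directly in $L^p(|x|^\alpha dx)$, whereas the paper interpolates unweighted bounds and inserts $|x|^\alpha\approx 2^{-k\alpha}$ afterwards. The factor $2^j$ should sit in the measure $2^j\,dt$ rather than in the operator. Also, no orthogonality in $t$ is actually used at $p=2$.
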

\begin{proof}

We let $I_{k,\mu} =[ 2^{-k} (\mu-1) , 2^{-k}( \mu+2)].$ For each $\mu$ with $E\cap I_{k,\mu} \neq \emptyset$, denote by $\Gamma_{k,\mu,j}$ a collection of intervals of length $2^{-j}$ covering $E\cap I_{k,\mu}$ such that $N(E\cap I_{k,\mu} , 2^{-j}) \approx \# \Gamma_{k,\mu,j} $. If $J\in \Gamma_{k,\mu,j}$ we let $t_J$ be the center of $J$.

We first prove that for $1\le p\le 2$, $k<j$,
\begin{multline}\label{eq:mainunweighted}
\Big( \int_{B_k\setminus B_{k+1} } \sup_{t\in E\cap I_{k,\mu}} |A_t^j g |^p dx\Big)^{\frac 1p}\\ {\lesssim} 2^{-j(d-1)(1-\frac 1p)} 2^{-k(d-1)(\frac 2p-1)} N(E\cap I_{k,\mu} , 2^{-j}) ^{\frac 1p} \|g\|_p.
\end{multline}
We use the pointwise estimate
\begin{multline} \label{eq:ptwest} \sup_{t\in E\cap I_{k,\mu}} |A_t^j g(x) |
\\ \le \Big(\sum_{J\in {\Gamma}_{k,\mu,j}} |A_{t_J}^j g(x) |^p \Big)^{\frac 1p}
+ \Big(\sum_{J\in {\Gamma}_{k,\mu,j} }\Big(\int_{|s|\le 2^{-j}} |\tfrac{d}{ds}A_{t_J+s}^j g(x) |ds \Big) ^p \Big)^{\frac 1p}.
\end{multline}
By the integral Minkowski inequality \eqref{eq:ptwest} follows from
\begin{subequations}
\begin{multline}\label{eq:jkmu-1}
\Big( \sum_{J\in {\Gamma}_{k,\mu,j}} \int_{B_k\setminus B_{k+1} } |A_{t_J} ^j g (x)|^p dx\Big)^{\frac 1p}
\\ {\lesssim} 2^{-j(d-1)(1-\frac 1p)} 2^{-k(d-1)(\frac 2p-1)} N(E\cap I_{k,\mu} , 2^{-j}) ^{\frac 1p} \|g\|_p
\end{multline}
and
\begin{multline} \label{eq:jkmu-2}
2^{-j} \sup_{|s|\le 2^{-j}}
\Big( \sum_{J\in {\Gamma}_{k,\mu,j}} \int_{B_k\setminus B_{k+1} } |\tfrac d{ds} A_{t_J+s} ^j g(x) |^p dx\Big)^{\frac 1p}
\\ {\lesssim} 2^{-j(d-1)(1-\frac 1p)} 2^{-k(d-1)(\frac 2p-1)} N(E\cap I_{k,\mu} , 2^{-j}) ^{\frac 1p} \|g\|_p\,.
\end{multline}
\end{subequations}

We give the proof of \eqref{eq:jkmu-1} and omit the completely analogous proof of \eqref{eq:jkmu-2}.
These inequalities follow from the cases $p=1$ and $p=2$ by interpolation. For $p=1$ we
observe that
$\{x\in B_k: ||x-y|-t_J|{\lesssim} 2^{-j} \}$ has measure $O(2^{-k(d-1)} 2^{-j})$ and $\|K_t^j\|_\infty =O(2^{j})$ for $1\le t\le 2$ and
estimate
\begin{align*} \int_{B_k} |A_{t_J} ^j g(x) | dx \le \int |g(y)| \int_{B_k} |K_{t_J}^j (x-y)|dx \, dy{\lesssim} 2^{-k(d-1)} \|g\|_1.
\end{align*}
Now \eqref{eq:jkmu-1} follows for $p=1$ by summing over the intervals in ${\Gamma}_{k,\mu,j}$. For $p=2$
we use $|\widehat \sigma(\xi)|{\lesssim} (1+|\xi|)^{-(d-1)/2}$ which gives $\|A^j_{t_J}\|_{L^2\to L^2}=O( 2^{-j(d-1)/2} )$ and
\eqref{eq:jkmu-1}
for $p=2$ follows.
Analogous arguments apply to \eqref{eq:jkmu-2} and thus one gets
\eqref{eq:mainunweighted}.

Let \[g_{k,\mu}(y)=g(y) {{\mathbbm 1}}_{2^{-k}\mu\le |y|\le 2^{-k}(\mu+1)} (y)\] and observe that \[ A^j_t g(x) =A^j_t\Big[ \sum_{\nu=\mu-2}^{\mu+2} g_{k,\nu} \Big ] (x) \quad \text{ for $|x|\le 2^{-k}$ and $t\in I_{k,\mu} $.}
\]
Moreover $w_\alpha(x)\approx 2^{-k\alpha}$ for $x\in B_k\setminus B_{k+1}$. Hence,
\begin{align*}
&\Big(\int_{B_k\setminus B_{k+1}} \sup_{t\in E} |A_t^j g(x)|^p|x|^\alpha dx\Big)^{\frac 1p} \\
&{\lesssim} 2^{-k\frac{\alpha}{p}} \Big( \int_{B_k} \sum_{\mu} \sup_{t\in E\cap I_{k,\mu} } \Big| A^j_t \big [\sum_{\nu=\mu-2}^{\mu+2} g_{k,\nu} \big] (x) \Big|^p dx\Big)^{\frac 1p}
\\
&{\lesssim} 2^{-k\frac{\alpha}{p}} 2^{-j(d-1)(1-\frac 1p)} 2^{-k(d-1)(\frac 2p-1)} \sup_\mu N(E\cap I_{k,\mu} , 2^{-j} )^{\frac 1p} \Big(\sum_\mu \Big\|\sum_{\nu=\mu-2}^{\mu+2} g_{k,\nu}\Big\|_p^p\Big)^{\frac 1p}
\\
&{\lesssim} 2^{-j(d-1)(1-\frac 1p)} 2^{-k(\frac{\alpha}{p}+(d-1)(\frac 2p-1))} \sup_\mu N(E\cap I_{k,\mu} , 2^{-j} )^{\frac 1p} \|g\|_p
\end{align*}
and the lemma is proved.
\end{proof}

To procced with the estimate for ${{\mathcal{M}}}_E{f_{\mathrm{mid}}}$ (i.e. the proof of \eqref{eq:goal} we decompose
$ A_t{f_{\mathrm{mid}}} =\sum_{j=0}^\infty A^j_t L_j {f_{\mathrm{mid}}} $
and then the left hand side of \eqref{eq:goal} as
\begin{align*}
&\Big( \int_{2^{-\ell-1} \le |x|\le 2^{-4}} \sup_{t\in E} |A_t {f_{\mathrm{mid}}} (x) |^p w_{1-d}(x) dx\Big)^{1/p} \\
& = \Big( \sum_{k=4}^\ell\int_{2^{-k-1}<|x|\le 2^{-k}} \sup_{t\in E } \Big |\sum_{j=0}^\infty A^j_t L_j {f_{\mathrm{mid}}} (x) \Big |^p w_{1-d}(x) dx\Big)^{1/p} = I+II
\end{align*} where
\begin{subequations}\begin{align}
\label{eq:I}
I&= \Big( \sum_{k=4}^\ell\int_{2^{-k-1}<|x|\le 2^{-k}} \sup_{t\in E } \Big |\sum_{n=0}^\infty A^{k+n}_t L_{k+n} {f_{\mathrm{mid}}} (x) \Big |^p w_{1-d}(x) dx\Big)^{1/p} ,
\\ \label{eq:II}
II&= \Big( \sum_{k=4}^\ell\int_{2^{-k-1}<|x|\le 2^{-k}} \sup_{t\in E } \Big |
\sum_{m=1}^k A^{k-m}_t L_{k-m} {f_{\mathrm{mid}}} (x) \Big |^p w_{1-d}(x) dx\Big)^{1/p} .
\end{align}
\end{subequations}
By Minkowski's inequality $I\le \sum_{n=0}^\infty I_n$, $II\le
\sum_{m=1}^k II_m$ where
\begin{subequations}
\begin{align}\label{eq:In}
I_n&= \Big( \sum_{k=4}^\ell\int_{2^{-k-1}<|x|\le 2^{-k}} \sup_{t\in E(\ell) } \Big | A^{k+n}_t L_{k+n} {f_{\mathrm{mid}}} (x) \Big |^p w_{1-d}(x) dx\Big)^{1/p} ,
\\\label{eq:IIm}
II_m&= \Big( \sum_{ \max\{m,4\}\le k\le \ell }\int_{2^{-k-1}<|x|\le 2^{-k}} \sup_{t\in E } \Big | A^{k-m}_t L_{k-m} {f_{\mathrm{mid}}} (x) \Big |^p w_{1-d}(x) dx\Big)^{1/p} .
\end{align}
\end{subequations}

Let $\chi_k$ be the characteristic function of $\{x:2^{-k-1}\le |x|\le 2^{-k} \}$. Let ${{\mathcal{U}}}=\{y: \frac 14\le |y|\le 8\}$
and let
\begin{align} \label{def:Skn}S^{k,n}_t f(x)&= \chi_k(x) A_t^{k+n} [ {{\mathbbm 1}}_{{\mathcal{U}}} f](x),
\\
\label{def:Tkm}T^{k,m}_t f(x)&= \chi_k(x) A_t^{k-m} [ {{\mathbbm 1}}_{{\mathcal{U}}} f](x).
\end{align}

We prove an inequality for a vector-valued operator.

\begin{proposition}
Let $p> 1+\frac{\gamma}{d-1}$, $0\le {\varepsilon}< (d-1)(p-1)-\gamma$ and $p\le q\le 2<\infty$. Then for $n\ge 0$, $0\le m\le \ell$,
\begin{subequations}
\begin{align}
\label{eq:LpellqS}
&\Big\| \sup_{t\in E} \Big|\sum_{k=4}^\ell S^{k,n}_t f_{k} \Big| \Big\|_{L^p(w_{1-d})} {\lesssim} 2^{-n{\varepsilon}/(2q) }
{{\mathscr C}}(\ell)^{1-p/q}
\Big\|\Big(\sum_{j=0}^\infty |f_j|^q\Big)^{1/q} \Big\|_p ,
\\
\label{eq:LpellqT}
&\Big\| \sup_{t\in E} \Big|\sum_{k=\max\{m,4\}}^\infty T^{k,m}_t f_{k} \Big| \Big\|_{L^p(w_{1-d})} {\lesssim} 2^{-m/q} {{\mathscr C}}(\ell)^{1- p/q}
\Big\|\Big(\sum_{j=0}^\infty |f_j|^q\Big)^{1/q} \Big\|_p .
\end{align}
\end{subequations}

\end{proposition}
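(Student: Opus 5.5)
The plan is to linearize the supremum and then interpolate between an $\ell^p$-valued estimate, which carries all the decay in $n$ (resp. $m$) and no factor ${{\mathscr C}}(\ell)$, and an $\ell^\infty$-valued estimate, which carries the factor ${{\mathscr C}}(\ell)$ and no decay. Fix a measurable function $x\mapsto t(x)\in E$. Consider the linear operators
\[\vec f=(f_k)\mapsto \sum_k S^{k,n}_{t(x)} f_k(x)\quad\text{and}\quad \vec f\mapsto \sum_k T^{k,m}_{t(x)}f_k(x).\]
Suppose we show
\[L^p(\ell^p)\to L^p(w_{1-d})\ \text{with norm } A,\qquad L^p(\ell^\infty)\to L^p(w_{1-d})\ \text{with norm } B.\]
Interpolating the mixed-norm spaces $L^p(\ell^q)$, $\frac1q=\frac{\vartheta}{p}$, gives the bound $A^{p/q}B^{1-p/q}$, uniformly in the choice of $t(\cdot)$. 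So it suffices to prove $A\lesssim 2^{-n\varepsilon/(2p)}$ (resp. $2^{-m/p}$) and $B\lesssim {{\mathscr C}}(\ell)$. Note that on ${{\mathcal{U}}}$ the weight $w_{1-d}$ is $\approx1$, so unweighted and weighted norms of ${{\mathbbm 1}}_{{\mathcal{U}}}f_k$ are comparable.

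\emph{The $\ell^p$ endpoint.} Since the $\chi_k$ have disjoint supports, the $p$-th power of the left side of \eqref{eq:LpellqS} with $q=p$ equals
\[\sum_k\int \chi_k\sup_{t\in E}|A^{k+n}_t[{{\mathbbm 1}}_{{\mathcal{U}}}f_k]|^p\,w_{1-d}\,dx .\]
We apply Lemma \ref{lem:k<j} with $j=k+n$ and $\alpha=1-d$; this uses $p\le 2$. Fix $\varepsilon'>0$ small. By the definition of $\gamma$ we have
\[N(E\cap I,2^{-k-n})\lesssim 2^{n(\gamma+\varepsilon')}\quad\text{for } |I|\approx 2^{-k}.\]
Moreover $2^{-k((d-1)(2-p)+1-d)}=2^{k(d-1)(p-1)}$, so
\[\Lambda^E_{k,k+n}(p,1-d)\lesssim 2^{-n((d-1)(p-1)-\gamma-\varepsilon')}\le 2^{-n\varepsilon/2}\]
once $\varepsilon'$ is small, given $\varepsilon<(d-1)(p-1)-\gamma$. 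Summing over $k$ gives $A\lesssim 2^{-n\varepsilon/(2p)}$. For $T$ we argue identically, using Lemma \ref{lem:k>j} with $j=k-m<k$, which gives the factor $2^{-m/p}$. The case $m=0$ with $j=k$ is covered by Lemma \ref{lem:k<j} with $n=0$.

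\emph{The $\ell^\infty$ endpoint.} Let $F=\sup_k|f_k|$. We use the commutation $A^j_t g=\sigma_t*(\widetilde\psi_j*g)$, which gives the pointwise bound
\[|A^j_t[{{\mathbbm 1}}_{{\mathcal{U}}}f_k]|\le A_t\big(|\widetilde\psi_j|*({{\mathbbm 1}}_{{\mathcal{U}}}F)\big)\le C\,A_t G,\qquad G:={{\mathbbm 1}}_{{{\mathcal{U}}}'}M_{HL}({{\mathbbm 1}}_{{\mathcal{U}}}F),\]
where ${{\mathcal{U}}}'$ is a slightly enlarged annulus; this works because $\mathrm{supp}\,\widetilde\psi_j\subset\{|x|\le 1/200\}$. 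This holds for every $j$, hence for both $S$ and $T$. The points $x$ in the support of $\chi_k$, $k\le\ell$, lie in $\Bellc$, so
\[\Big|\sum_k S^{k,n}_{t(x)}f_k(x)\Big|\lesssim {{\mathbbm 1}}_{\Bellc}(x)\sup_{t\in E}|A_tG(x)|,\]
and similarly for $T$. By the definition of ${{\mathscr C}}(\ell)$ and the Hardy--Littlewood bound,
\[B\lesssim {{\mathscr C}}(\ell)\|G\|_{L^p(w_{1-d})}\lesssim{{\mathscr C}}(\ell)\|F\|_p .\]

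\emph{Main obstacle.} The delicate point is the $\ell^\infty$ endpoint for $T$. There, naively moving the $\widetilde\psi_{k-m}$ smoothing onto the output would shift $x$ by up to $2^{m-k}\gg|x|$, which is incompatible with the singular weight $|x|^{1-d}$. Putting the smoothing on the input side, where the weight is harmless, circumvents this, and this is the step to check carefully. A further technical point is the justification of the vector-valued interpolation: one should use complex interpolation of $L^p(\ell^q)$ for the linearized operators and then take the supremum over $t(\cdot)$, which is legitimate for continuous $f_k$.
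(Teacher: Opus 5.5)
Your proposal is correct and follows the paper's proof. Like the paper, you interpolate between an $L^p(\ell^p)$ bound and an $L^p(\ell^\infty)$ bound. The $L^p(\ell^p)$ bound comes from Lemma \ref{lem:k<j} together with the Assouad bound $N(E\cap I,2^{-k-n})\lesssim 2^{n(\gamma+\varepsilon/2)}$ (respectively Lemma \ref{lem:k>j} for $T$). The $L^p(\ell^\infty)$ bound comes from positivity, the definition of $\mathscr{C}(\ell)$ and the Hardy--Littlewood maximal theorem. Your explicit linearization, and your restriction to $\mathcal{U}'$ so that only the unweighted maximal theorem is needed, are harmless refinements.

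One caveat, which the paper shares: your $\ell^\infty$ step for $T$ only works for $k\le\ell$. For $k>\ell$ the support of $\chi_k$ lies in $B_{\ell+1}$, where $\mathscr{C}(\ell)$ gives no control. So the upper limit $\infty$ in \eqref{eq:LpellqT} should be read as $\ell$, which is all that is used for $II_m$.
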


\begin{proof}
We use the complex interpolation formula $[L^p(\ell^p), L^p(\ell^\infty)]_\theta = L^p(\ell^q) $ for $p\le q<\infty$ and $\theta= 1- p/q$, see \cite[Ch. 5]{bergh-lofstrom}.
We first prove the case $p=q$ of \eqref{eq:LpellqS}, i.e.
\begin{equation}\label{eq:LpellpS}\Big\| \sup_{t\in E} \Big|\sum_{k=0}^\ell S^{k,n}_t f_{k} \Big| \Big\|_{L^p(w_{1-d})} {\lesssim} 2^{-n{\varepsilon}/(2p) }
\Big(\sum_{j=0}^\infty \|f_j\|_p^p\Big)^{1/p} .
\end{equation}
The left hand side is dominated by
\begin{align*}
&\Big(\sum_{k=0}^\ell \int_{B_k\setminus B_{k+1} } \sup_{t\in E} \big|S^{k,n}_t f_k(x)|^p |x|^{1-d}dx \Big)^{1/p}
\\&{\lesssim} \Big(\sum_{k=0}^\ell
2^{-(k+n)\frac{(d-1)p}{p'} } \sup_{\substack{I\subset [1,2] \\ |I|\approx 2^{-k} } } N(E\cap I,2^{-k-n} ) 2^{-k((d-1)( 2-p)+1-d) } \|f_k\|_p^p \Big)^{1/p}
\end{align*}
by Lemma \ref{lem:k<j}. By the definition of Assouad dimension
\[N(E\cap I,2^{-k-n} ){\lesssim}_{\varepsilon} 2^{n(\gamma+\frac{\varepsilon} 2)}\] when $|I|=2^{-k}$ and therefore we get
\begin{equation*}\Big\| \sup_{t\in E} \Big|\sum_{k=0}^\ell S^{k,n}_t f_{k} \Big| \Big\|_{L^p(w_{1-d})} {\lesssim} 2^{-n(d-1)(p-1)/p} 2^{n(\gamma+\frac {\varepsilon} 2)/p }
\Big\|\Big(\sum_{j=0}^\infty |f_j|^p\Big)^{1/p} \Big\|_p
\end{equation*}
which gives \eqref{eq:LpellpS} by our assumption ${\varepsilon}<(d-1)(p-1)-\gamma$.

For $q=\infty$ we prove
\begin{equation}
\label{eq:LpellinftyS}
\Big\| \sup_{t\in E} \Big|\sum_{k=0}^\infty S^{k,n}_t f_{k} \Big| \Big\|_{L^p(w_{1-d})} {\lesssim} {{\mathscr C}}(\ell)
\big\|\sup_j|f_j| \big\|_p .
\end{equation}
We have the pointwise estimate (by the positivity property of the spherical mean operator)
\begin{align*}
\sup_{t\in E}\Big|\sum_{k=0}^\ell S^{k,n}_t f_{k}(x) \Big|\le \sup_{t\in E} A_t \big[\sup_k |L_{k+n} [{{\mathbbm 1}}_{{\mathcal{U}}} f_k]|\big]
\end{align*} and therefore
\begin{align*}& \Big\| \sup_{t\in E} \Big|\sum_{k=0}^\ell S^{k,n}_t f_{k} \Big| \Big\|_{L^p(w_{1-d})} \\& {\lesssim} {{\mathscr C}}(\ell) \big\| \sup_k |L_{k+n} [{{\mathbbm 1}}_{{\mathcal{U}}} f_k] | \big\|_{L^p(w_{1-d})}
{\lesssim} {{\mathscr C}}(\ell) \big\| M_{HL} [\sup_k | {{\mathbbm 1}}_{{\mathcal{U}}} f_k | ]\big\|_{L^p(w_{1-d})}
\\& {\lesssim} {{\mathscr C}}(\ell) \big\| {{\mathbbm 1}}_{{\mathcal{U}}} \sup_k|f_k| \big\|_{L^p(w_{1-d})}
{\lesssim} {{\mathscr C}}(\ell) \big\| \sup_k|f_k| \big\|_{p} .
\end{align*}
Here we have used the definition of ${{\mathscr C}}(\ell)$, the boundedness of the Hardy-Littlewood maximal operator on $L^p(w_{1-d})$, and the fact that $|y|^{1-d} \approx 1$ for $y\in {{\mathcal{U}}}$. This proves \eqref{eq:LpellinftyS}.
We now interpolate between \eqref{eq:LpellpS} and \eqref{eq:LpellinftyS} to get \eqref{eq:LpellqS}.

The estimates for $T^{k,m}_t$ are analogous if instead of Lemma \ref{lem:k<j} we use Lemma \ref{lem:k>j}.
\end{proof}
\begin{proof}[Proof of Theorem \ref{thm:newfullga}, concluded]
We shall now derive for $I$, $II$ as in \eqref{eq:I}, \eqref{eq:II} the estimate
\begin{equation}\label{eq:I-II} I+II \le C(p) (1+ {\varepsilon}^{-1}) {{\mathscr C}}(\ell) ^{1-p/2} \|f\|_{L^p(w_{1-d})} .
\end{equation}

For this we apply \eqref{eq:LpellqS} with $q=2$ and with $f_k(y) = L_{k+n} {f_{\mathrm{mid}}}(y)$, observing that ${{\mathbbm 1}}_{{\mathcal{U}}} L_{k+n}f_k=f_k$
by the support properties of ${f_{\mathrm{mid}}}$ and the kernels of the $ L_{k+n}$. Thus we get
\begin{align*}
I_n&{\lesssim} 2^{-n {\varepsilon}/4} {{\mathscr C}}(\ell)^{1-p/2} \Big\| {{\mathbbm 1}}_{{\mathcal{U}}} \Big(\sum_{k=0} ^\infty| L_{k+n} {f_{\mathrm{mid}}} |^2\Big)^{1/2} \Big\|_p
\\
&{\lesssim} 2^{-n {\varepsilon}/4} {{\mathscr C}}(\ell)^{1-p/2} \Big\| \Big(\sum_{k=0} ^\infty| L_{k+n} {f_{\mathrm{mid}}} |^2\Big)^{1/2} \Big\|_{L^p(w_{1-d)} }
\\
&{\lesssim}_p 2^{-n {\varepsilon}/4} {{\mathscr C}}(\ell)^{1-p/2}
\|f\|_{L^p(w_{1-d}) }
\end{align*}
by Littlewood-Paley theory in $L^p(w_{1-d})$ (\cite{Kurtz80}). After summing in $n$, we get \eqref{eq:I-II} for the term $I$ and the corresponding estimate for $II$ is analogous.

Finally, combining \eqref{eq:lowconst}, \eqref{eq:highconst}, \eqref{eq:midconst} with \eqref{eq:I-II} we get
\[ \big\|\sup_{t\in E(\ell)} |A_t f| \big\|_{L^p(w_{1-d})} \le C(p) (1+ {\varepsilon}^{-1} {{\mathscr C}}(\ell)^{1-p/2} )\|f\|_{L^p(w_{1-d})} \]
and hence, by definition of ${{\mathscr C}}(\ell)$,
\[ {{\mathscr C}}(\ell) \le C(p) (1+ {\varepsilon}^{-1} {{\mathscr C}}(\ell)^{1-p/2}). \]
This implies the $\ell$-uniform bound \[{{\mathscr C}}(\ell) {\lesssim}
\max\big\{ 2C(p), (2C(p){\varepsilon}^{-1})^{2/p}\big\}
\]
as can be seen by separately considering the cases $1\le {\varepsilon}{{\mathscr C}}(\ell)^{1-p/2}$ and $1\ge {\varepsilon}{{\mathscr C}}(\ell)^{1-p/2}$.
The global estimate is similar, making use by a reduction in \cite{DuoandikoetxeaSeijo} which is valid for negative $\alpha$.
\end{proof}

\begin{remarka}
The argument at the end of the proof can be found in a number of harmonic analysis papers and was first attributed to M. Christ in
\cite{CarberyConvexBody}. It was also used by Duoandikoetxea and Vega \cite{DuoandikoetxeaVega} for the lacunary maximal function and has been useful in many other problems in harmonic analysis (among them \cite{Christ-frac, GreenleafSeegerWainger, SeegerWainger-frac, LabaPramanik-Duke, BakOberlinSeegerAJM}).
The Christ argument was designed to simplify the Nagel--Stein--Wainger bootstrap reasoning \cite{NagelSteinWainger-lac} for Hilbert transforms along curves; indeed that bootstrap argument could also be used above.
\end{remarka}
\section{Bounds for the sequence examples}
\subsection{\texorpdfstring{$L^p(w_\alpha)$-boundedness of $M_{E^\mu}$ for $p(\mu)<p<\frac{d}{d-1}$, $\alpha=L(p)$}{Boundedness for minimal alpha}}
This is very similar to the proof of Theorem \ref{thm:newfullga}.
For
$p(\mu)<p<\frac{d}{d-1}$ the value $L(p)$ associated with $E^\mu$ is
\[L(p)\equiv\alpha(p,\mu):=1-d+\frac{d-(d-1)p}{\mu} \,.\]

We now let, for $\ell\ge 4$, ${{\mathscr C}}(\ell)$ be the infimum over all constants $C$ such that
\[ \big\|\sup_{t\in E^\mu} |A_t f | \big \|_{L^p(\Bellc,w_{\alpha(p,\mu)})} \le C \|f\|_{L^p(w_{\alpha(p,\mu)})}\]
for all $f\in L^p(w_{\alpha(p,\mu)})$ .

The inequalities \eqref{eq:lowconst}, \eqref{eq:highconst} and \eqref{eq:midconst} have obvious analogs with $w_{\alpha(p,\mu)}$ in place of $w_{1-d}$.

It is our task to show the analogue of \eqref{eq:goal}, that is
\begin{equation}\label{eq:goalseq}
\Big( \int_{2^{-\ell-1}\le |x|\le 2^{-4}} \sup_{t\in E^\mu } |A_t {f_{\mathrm{mid}}} (x) |^p w_{\alpha(p,\mu)} (x) dx\Big)^{1/p} {\lesssim} \|f\|_{L^p(w_{\alpha(p,
\mu)})}
\end{equation} holds with an implicit constant independent of $\ell$.
We will use again Lemma \ref{lem:k<j}.

If $|I|= 2^{-k}$ and $j\ge k$ then
\[ N(E\cap I, 2^{-j} ){\lesssim} \begin{cases}
2^{j-k} &\text{ if $\frac{\mu}{\mu+1} j\le k \le j$,}
\\
2^{j/(\mu+1)} &\text{ if $0<k\le \frac{\mu}{\mu+1} j$,}
\end{cases} \]
which is essentially sharp at least for $I=[1, 1+2^{-k}].$

Then the quantity in Lemma \ref{lem:k<j},
\[\Lambda_{k,j}^E(p,\alpha) = 2^{-j (d-1)(p-1)} \sup_{I:|I|=2^{-k}} N(E\cap I) 2^{-k ((d-1)(2-p)+\alpha }\] satisfies for $\alpha\ge \alpha(p,\mu)$
\begin{multline}\label{eq:Lambdabounds} \Lambda_{k,j}^E(p,\alpha) \\ {\lesssim} 2^{-k (\alpha-\alpha(p,\mu))}\times
\begin{cases} 2^{ (j- k\frac{\mu+1}{\mu}) (d-(d-1)p)}
&\text{ if $\frac{\mu}{\mu+1} j\le k \le j$}\,,
\\
2^{(k \frac{\mu+1}\mu-j) ((d-1)(p-1)-\frac{1}{\mu+1}) }
&\text{ if $0<k\le \frac{\mu}{\mu+1} j$}\,.
\end{cases}
\end{multline}

For $p(\mu)<p<\frac{d}{d-1}$ the displayed estimate means that for fixed $k$ the quantity $j\mapsto {\Lambda}_{j,k}$ peaks at $j=\frac{\mu+1}{\mu} k$ (where it is $O(1)$) and has exponential decay $2^{-{\epsilon}(p)| j-\frac{\mu+1}{\mu} k|} $ with ${\epsilon}(p)>0$ as $j$ moves away from $\frac{\mu+1}{\mu} k$.

Let $n\in {{\mathbb{Z}}}$ and $j_n(k)$ be the unique integer such that \[ \tfrac{\mu+1}\mu k+n\le j<\tfrac{\mu+1}{\mu} k+n+1\]
Let ${{\mathcal{K}}}(n,\ell)=\{k: 4\le k\le \ell, j_n(k)\ge 0\}$. For $k\in {{\mathcal{K}}}(n,\ell)$,
\begin{equation} \label{eq:Smoddef} {{\mathcal{S}}}^{k,n} f(x) = \chi_k(x) A_t^{j_n(k)} [{{\mathbbm 1}}_{{\mathcal{U}}} f](x).
\end{equation}

We have takes following inequality that takes the place of \eqref{eq:LpellqS} in the present situation: For
$p< q< \infty$:
\begin{equation}
\label{eq:LpellqSmod}
\Big\| \sup_{t\in E} \Big| \sum_{k\in {{\mathcal{K}}}(n,\ell)} {{\mathcal{S}}}^{k,n}_t f_k\Big| \Big\|_{L^p(w_{\alpha(p,\mu)})} {\lesssim} 2^{-|n|{\varepsilon}/ 2q} {{\mathscr C}}(\ell) ^{1-p/q} \Big\|\Big(\sum_{k=0}^\infty|f_k|^q\Big)^{1/q} \Big\|_p
\end{equation}
for some ${\varepsilon}>0$; with
$0<{\varepsilon}<\frac{\mu}{\mu+1}(d-(d-1)p)$ when $n<0$ and $0<{\varepsilon} <\frac{\mu}{\mu+1} ((d-1)(p-1)-\frac{1}{\mu+1})$. It is possible to make both choices simultaneously since $p(\mu)<p<\frac{d}{d-1}$.

\eqref{eq:LpellqSmod} which follows from
\begin{equation}
\label{eq:LpellpSmod}
\Big\| \sup_{t\in E} \big| \sum_{k\in {{\mathcal{K}}}(n,\ell)} {{\mathcal{S}}}^{k,n}_t f_k\big|\Big\|_{L^p(w_{\alpha(p,\mu)})} {\lesssim} 2^{-|n|{\varepsilon}/ 2p} \Big\|\Big(\sum_{k=0}^\infty |f_k|^p\Big)^{1/p} \Big\|_p
\end{equation}
and
\begin{equation}
\label{eq:LpellinftySmod}
\Big\| \sup_{t\in E} \big| \sum_{k\in {{\mathcal{K}}}(n,\ell)} {{\mathcal{S}}}^{k,n}_t f_k\big|\Big\|_{L^p(w_{\alpha(p,\mu)})} {\lesssim} {{\mathscr C}}(\ell)\big\|\sup_{k\ge 0}|f_k|\big\|_p
\end{equation}
The proof of \eqref{eq:LpellinftySmod} is analogous to the proof of \eqref{eq:LpellinftyS}.
Given \eqref{eq:Lambdabounds} the proof of \eqref{eq:LpellpSmod} is analogous to the proof of \eqref{eq:LpellpS}.

\subsection{\texorpdfstring{$L^p(w_{1-d})$ unboundedness for $p=\frac{d}{d-1}$}{Unboundedness for critical p}}\label{sec:lowerboundpd}

Consider the case \[ E_\mu=\{ 1+n^{-\mu}: n\in {{\mathbb{N}}}\} \text{ for $\mu<1$.}\]
We modify an example in \cite[\S4]{FraccaroliRoosSeeger}.

We fix $k$ large.
Let \[R_j = \{ (y',y_d) : |y_d-1|\le {\varepsilon}^{-1}2^{-j}, \, {\varepsilon} 2^{k-j}\le |y'|\le {\varepsilon}^{-1} 2^{k-j} \} \] where we put the restriction $2k\le j\le k\frac{\mu+1}{\mu} $ (this gives a range of $j$ of cardinality $\approx k$ since $\mu<1$. Note that $R_j$ is comparable to the outer part of a box with $(d-1)$ long horizontal sides of length $2^{k-j}$ and one short vertical side of length $2^{-j}$ around the point $(0,1)$. Let \[ g_j= 2^{(j-k)\frac{d-1}{p}} 2^{j/p} {{\mathbbm 1}}_{R_j}. \]Then $\|g_j\|_p =1$. Below we will apply this for $p=\frac{d}{d-1}$.

Let ${\epsilon}\ll \frac{1-\mu}{\mu}$ and ${\Gamma}_k=\{ j: 2k(1+{\epsilon})\le j \le k(\frac{\mu+1-{\epsilon}}{\mu})\} $ so that $\#\Gamma_k\ge ck.$
Let \[ f_k= \sum_{j\in \Gamma_k} g_j.\]
Since every $y$ belongs to at most $O({\varepsilon}^{-1})$ of the $R_j$ we observe that \begin{equation}\label{eq:normfk} \|f_k\|_{L^p(w_{1-d})}\approx \|f_k\|_p {\lesssim}_{{\varepsilon},\mu} k^{1/p} .\end{equation}

Let \[ V_k=\{(x',x_d): -2^{-k+1}\le x_d \le -2^{-k-1}, \,\, |x'|\le 2^{-k}\}.\] There exists $n$ such that $n^{-\mu} \approx 2^{-k}$ and $|n^{-\mu}+x_d|\le 2^{-k\frac{\mu+1}{\mu}}$ (and note that for $j\in \Gamma_k$ we have $2^{-k\frac{\mu+1}{\mu}}\ll 2^{-j}$ for large $k$).
Then for $j\in \Gamma_k$, $t_n=1+n^{-\mu}$ we have
\[ A_{t_n} g_j(x) {\gtrsim} \text{width}(R_j) 2^{(j-k)\frac{d-1}{p}} 2^{j/p} {\gtrsim} 2^{(k-j)(d-1)} 2^{jd/p} 2^{-k\frac{d-1}{p}}
\] This gives a meaningful lower bound for $p=\frac{d}{d-1} $:
\[ A_{t_n} g_j(x) \ge 2^{k\frac{d-1}{d}} \text{ for all } j\in \Gamma_k.\] Hence
\[ A_{t_n} f_k(x) {\gtrsim} k 2^{k\frac{d-1}{d}} \]
Since $x\in V_k$ was arbitrary we obtain
\[\sup_{n: n^{-\mu} \approx 2^{-k} } A_{t_n} f_k(x) {\gtrsim} k 2^{k\frac{d-1}{d}}\] and thus
\begin{equation} \label{eq:Mfknorm}
\Big(\int_{V_k} |M_{E_\mu} f_k(x)|^{\frac{d}{d-1} }|x|^{1-d} dx\Big)^{\frac{d-1}{d} } {\gtrsim} k.
\end{equation}
We thus see from \eqref{eq:normfk} and \eqref{eq:Mfknorm} that
\[ \frac{ \|M_{E_\mu} f_k\|_{L^{\frac{d}{d-1}}(w_{1-d})} }{\|f_k\|_{L^{\frac{d}{d-1}}(w_{1-d})} } {\gtrsim} k^{\frac{1}{d-1}} \]
which yields the unboundedness of $M_E$ on $L^p(w_{1-d})$.

We fix $k$ large. Let
\[ R^j=\{ (y',y_d) : |y'|\leq \varepsilon^{-1}2^{k-j},\,\, 1-\varepsilon^{-1}2^{-j}\leq |y'|^2+|y_d|^2\leq 1+\varepsilon^{-1}2^{-j} \} \]
where we put the restriction $k\leq j\leq \min\{ 2k,\frac{\mu+1}{\mu}k \}$. Note that for any $\mu>0$, we have the cardinality of $j$ is $\approx k$ where the implicit constant depending on $\mu$. Let
\[ h_j=2^{(j-k)\frac{d-1}{p}+\frac{j}{p}}{{\mathbbm 1}}_{R^j} \]
so that $\Vert h_j\Vert_{L^p(w_{1-d})}\approx 1$. Define $\Gamma_k=\{ j: 2k(1+{\epsilon})\leq j\leq k(\frac{\mu+1}{\mu}-{\epsilon}) \}$ for a sufficiently small ${\epsilon}>0$ so that $\sharp \Gamma_k\geq ck$. Then, we define
\[ f_k=\sum_{j\in\Gamma_k}h_j. \]
By the triangle inequality, one can easily check that $\| f_k\|_{L^p(w_{1-d})}\lesssim k^{\frac{1}{p}}$ holds for any $p>1$.

Now define
\[ V^k=\{ (x',x_d) : -2^{-k+1}\leq x_d\leq -2^{-k-1},\,\,|x'|\leq 2^{-k} \}. \]
We now claim that for any $-2^{k+1}\leq x_d\leq -2^{-k-1}$, there exists $n$ such that
\begin{equation}\label{eq:finding good n}
n^{-\mu}\approx 2^{-k} \text{ and } |n^{-\mu}+x_d|\leq 2^{-k\frac{\mu+1}{\mu}}.
\end{equation}
Assuming \eqref{eq:finding good n}, one can see that for $t_n=1+n^{-\mu}$ and $j\in\Gamma_k$,
\[ A_{t_n}h_j(x)\gtrsim \text{area}(R_j)2^{(j-k)\frac{d-1}{p}+\frac{j}{p}} {\gtrsim} 2^{(k-j)(d-1) }2^{jd/p} 2^{-k\frac{d-1}{p}}.\]
Hence, for $p = \frac{d}{d-1}$,
\[A_{t_n}f_k(x) {\gtrsim} k 2^{k \frac{d-1}{d}}.\]
For $x \in V_k$, we have
\[M_{E_\mu} f_k (x) {\gtrsim} k 2^{k \frac{d-1}{d}}.\]
Thus
\[\Big(\int_{V_k} |M_{E_\mu} f_k(x)|^{\frac{d}{d-1} }|x|^{1-d} dx\Big)^{\frac{d-1}{d} } {\gtrsim} k\]
and
\[ \frac{ \|M_{E_\mu} f_k\|_{L^{\frac{d}{d-1}}(w_{1-d})} }{\|f_k\|_{L^{\frac{d}{d-1}}(w_{1-d})} } {\gtrsim} k^{\frac{1}{d}}. \]

\subsection{\texorpdfstring{Unboundedness of ${{\mathcal{M}}}_{{{\mathcal{E}}}^\mu}$ on $L^p(\omega_\alpha)$ for $\alpha=U(p)$}{Unboundedness for maximal alpha}}\label{unbdd U(p)}

In this section, we show that the global maximal operator ${{\mathcal{M}}}_{{{\mathcal{E}}}^\mu}$ can never be bounded on $L^p({\omega}_\alpha)$ for the upper endpoint $\alpha=U(p)$. Thus, we completely characterize the boundedness of ${{\mathcal{M}}}_{{{\mathcal{E}}}^\mu}$ on $L^p(\omega_\alpha)$. Note that $U(p)=(d-1)(p-1)-\frac{1}{\mu+1}$.

Define $f=\chi_{B(0,1)}$, the characteristic function of a ball of radius 1 centered at the origin. Since $\alpha=U(p)\geq 0$, we have $\| f\|_{L^p(\omega_\alpha)}\approx 1$. Now we claim
\begin{equation}
\| {{\mathcal{M}}}_{{{\mathcal{E}}}^\mu}f\|_{L^p({\omega}_\alpha)}\gtrsim (\sum_{k\geq 0} 2^{k(\alpha-(d-1)(p-1)+\frac{1}{\mu+1})})^{\frac{1}{p}}.
\end{equation}
This shows that when $\alpha=U(p)$, the operator ${{\mathcal{M}}}_{{{\mathcal{E}}}^\mu}$ is unbounded on $L^p({\omega}_\alpha)$ for any $p$.

For $x\in \{ y: |y|\in 2^k(1+n^{-\mu})+(-\frac{1}{100},\frac{1}{100})\}=:\mathfrak{S}(k,n)$, one can see that
\[ |{{\mathcal{M}}}_{{{\mathcal{E}}}^\mu}f(x)|\gtrsim 2^{-k(d-1)} \]
holds. For any fixed sufficiently large $k$, we show that $\mathfrak{S}(k,n)$ are disjoint for $1\leq n\leq 2^{\frac{k}{\mu+1}}\mu^{\frac{1}{\mu+1}}$. It suffices to show that intervals $\mathfrak{I}(k,n):=2^k(1+n^{-\mu})+(-\frac{1}{100},\frac{1}{100})$ are disjoint. By the mean value theorem, we have
\[ 2^k((1+(n-1)^{-\mu})-(1+n^{-\mu}))\geq 2^k\mu n^{-\mu-1}\geq 1 \]
for $2\leq n\leq 2^{\frac{k}{\mu+1}}\mu^{\frac{1}{\mu+1}}$. Therefore, we obtain
\[ \left|\bigcup_{n=1}^{2^{\frac{k}{\mu+1}}\mu^{\frac{1}{\mu+1}}}\mathfrak{S}(k,n)\right|\gtrsim 2^{\frac{k}{\mu+1}}2^{k(d-1)}. \]
since $|x|\sim 2^k$ for $x\in\mathfrak{S}(k,n)$, we finally get
\[ \| {{\mathcal{M}}}_{{{\mathcal{E}}}^\mu}f\|^p_{L^p({\omega}_\alpha)}\gtrsim \sum_{k\geq 0}2^{k\alpha}2^{-k(d-1)p}2^{\frac{k}{\mu+1}}2^{k(d-1)}= \sum_{k\geq 0} 2^{k(\alpha-(d-1)(p-1)+\frac{1}{\mu+1})}, \]
as desired.

\subsection{\texorpdfstring{Unboundedness of $M_{[1,2]}$ on $L^p(w_{U(p)})$}{Unboundedness of the full maximal function for maximal alpha}}

Here $U(p)=(d-1)(p-1)-1$ for $p>\frac{d}{d-1}$.
We use the familiar example due to Stein \cite{SteinPNAS1976}.
Let $f(y) = |y|^{1-d} [\log(\tfrac{1}{|y|})]^{-u} {{\mathbbm 1}}_{|y|\le 1/2}$. Then
\[ \int |f(y)|^p |y|^{(d-1)(p-1)-1} dy= \int_{|y|<1/2} |y|^{-d} (\log\frac 1{|y|} )^{-u p} dy\]
which is finite for $u>1/p.$
However if $u<1$ then $A_t f(x)=\infty$ for $1<|x|<2$ and thus $M_{[1/2]} f\notin L^p(w_{U(p)}).$
Thus we obtain a unboundedness by choosing $u\in (\frac 1p,1)$.

Following \cite{SeegerWaingerWright1997} one can derive counterexamples for other $E\subset [1,2]$ with $|E|=0$ and $\beta=1$, following \cite{SeegerWaingerWright1997}.
If one defines $g_\delta (y) =|y|^{1-d} {{\mathbbm 1}}_{\delta\le |y| \le 1/2}$,
the \[\Big(\int |g_\delta(y) |^p |y|^{(d-1)(p-1) -1} dy\Big)^{1/p} {\gtrsim} [\log \tfrac 1\delta]^{1/p} \] while
$A_{t} g_\delta(x) {\gtrsim} \log \frac 1 \delta$ if $t\in E$ and $||x|-t|<c\delta$. This means that $M_E g_\delta(x){\gtrsim} \log 1/\delta $ on a subset of $\{1\le |x|\le 2\}$ which is of measure ${\gtrsim} N(E,\delta) \delta $. This leads to the necessary condition
$N(E,\delta){\lesssim} \delta^{-1} [\log \tfrac 1\delta]^{1-p}$ uniformly in $\delta$.

\subsection{\texorpdfstring{Boundedness for $M_{E^\mu}$ when $\alpha=U(p)$}{Boundedness of local operator for maximal alpha}} Here $U(p)=(d-1)(p-1)-\frac{1}{\mu+1}.$
In contrast to the situation for the global operator with respect to ${{\mathcal{E}}}^\mu$, the local maximal operator $M_{E^\mu}$ is bounded when $\alpha=U(p)$.
We need an elementary application of the Schur test.

\begin{lemma} \label{lem:Gamma}
Let $\phi\in L^\infty$, with support in $\{x:|x|\le 1\}$ and let $\chi_k$ be the characteristic function of $\{y\in {{\mathbb{R}}}^d: 2^{-k-1}\le |y|\le 2^{-k}\}.$ For ${{\mathfrak{z}}}\in {{\mathbb{Z}}}$, $m\ge 0$ define the integral operator $\Gamma=\Gamma_{\rho,m,{{\mathfrak{z}}}}$ by
\[\Gamma f(x)= \int 2^{md}\phi(2^m (x-y)) |y|^{-\rho} \chi_{m+{{\mathfrak{z}}}}(y) f(y) dy\]
Then, for $1\le p\le \infty$,
\[\|\Gamma\|_{L^p\to L^p} {\lesssim} \begin{cases} 2^{(m+{{\mathfrak{z}}})\rho} 2^{-{{\mathfrak{z}}} d/p'} &\text{ if ${{\mathfrak{z}}}\ge 0$}
\\2^{(m+{{\mathfrak{z}}}) \rho } &\text{ if ${{\mathfrak{z}}}\le 0$}
\end{cases} \]
\end{lemma}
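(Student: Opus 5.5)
The plan is to use the Schur test with the kernel
\[K(x,y)=2^{md}\phi(2^m(x-y))\,|y|^{-\rho}\chi_{m+{{\mathfrak{z}}}}(y).\]
On the support of $\chi_{m+{{\mathfrak{z}}}}$ we have $|y|\approx 2^{-m-{{\mathfrak{z}}}}$, so $|y|^{-\rho}\approx 2^{(m+{{\mathfrak{z}}})\rho}$. Hence $|K(x,y)|\lesssim 2^{(m+{{\mathfrak{z}}})\rho}\,2^{md}\,{{\mathbbm 1}}_{|x-y|\le 2^{-m}}\,\chi_{m+{{\mathfrak{z}}}}(y)$. We may therefore factor out $2^{(m+{{\mathfrak{z}}})\rho}$ and reduce to bounding the operator with kernel $K_0(x,y)=2^{md}{{\mathbbm 1}}_{|x-y|\le 2^{-m}}\chi_{m+{{\mathfrak{z}}}}(y)$. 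We prove the $L^1$ and $L^\infty$ bounds separately and interpolate, or equivalently use the Schur test in its $L^p$ form $\|T\|_{L^p\to L^p}\le A_1^{1/p}A_\infty^{1/p'}$. Here $A_1=\sup_y\int|K_0|\,dx$ and $A_\infty=\sup_x\int|K_0|\,dy$.

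The column bound comes first. For fixed $y$, $\int K_0(x,y)\,dx\lesssim 2^{md}\cdot 2^{-md}=1$, so $A_1\lesssim 1$ in all cases, which gives $\|\Gamma\|_{L^1\to L^1}\lesssim 2^{(m+{{\mathfrak{z}}})\rho}$.

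The row bound comes next. For fixed $x$, $\int K_0(x,y)\,dy\le 2^{md}\,|B(x,2^{-m})\cap\{|y|\approx 2^{-m-{{\mathfrak{z}}}}\}|$. If ${{\mathfrak{z}}}\le 0$, the annulus is at least as large as the ball, so this is at most $2^{md}2^{-md}\lesssim 1$. If ${{\mathfrak{z}}}\ge 0$, the measure is at most that of the ball $B_{m+{{\mathfrak{z}}}}$, namely $\lesssim 2^{-(m+{{\mathfrak{z}}})d}$, so $A_\infty\lesssim 2^{-{{\mathfrak{z}}} d}$. This gives $\|\Gamma\|_{L^\infty\to L^\infty}\lesssim 2^{(m+{{\mathfrak{z}}})\rho}\min\{1,2^{-{{\mathfrak{z}}} d}\}$.

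Combining, $\|\Gamma\|_{L^p\to L^p}\lesssim 2^{(m+{{\mathfrak{z}}})\rho}A_\infty^{1/p'}$. This equals $2^{(m+{{\mathfrak{z}}})\rho}2^{-{{\mathfrak{z}}} d/p'}$ for ${{\mathfrak{z}}}\ge0$ and $2^{(m+{{\mathfrak{z}}})\rho}$ for ${{\mathfrak{z}}}\le0$, as claimed. The interpolation step can be done either through Riesz--Thorin or directly through Hölder's inequality in the Schur test. There is no real obstacle. The only point needing care is that the bound $|y|^{-\rho}\approx 2^{(m+{{\mathfrak{z}}})\rho}$ holds uniformly for any real $\rho$, because it is applied only on the dyadic annulus, and that the implicit constants depend only on $\|\phi\|_\infty$, $d$ and $\rho$.
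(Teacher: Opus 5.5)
Your proposal is correct and follows the paper's proof: it uses the same Schur-test bounds $\sup_y\int|K(x,y)|\,dx\lesssim 2^{(m+{\mathfrak{z}})\rho}$ and $\sup_x\int|K(x,y)|\,dy\lesssim 2^{(m+{\mathfrak{z}})\rho}\min\{1,2^{-{\mathfrak{z}} d}\}$, followed by interpolation between the resulting $L^1$ and $L^\infty$ bounds. One cosmetic point: for ${\mathfrak{z}}\le 0$ the row bound needs only $|B(x,2^{-m})|\lesssim 2^{-md}$, so your comparison of the annulus with the ball is unnecessary.
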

\begin{proof} This follows from Schur's test. Let $K(x,y)= 2^{md}\phi(2^m (x-y)) |y|^{-\rho} \chi_{m+{{\mathfrak{z}}}}(y) $.
Then
\[ \sup_y\int |K(x,y)| dx {\lesssim} 2^{-(m+{{\mathfrak{z}}})(-\rho)}\]
and
\[\sup_x\int |K(x,y)| dy {\lesssim} \begin{cases} 2^{md } 2^{-(m+{{\mathfrak{z}}})d } 2^{-(m+{{\mathfrak{z}}})(-\rho)} &\text{ if } {{\mathfrak{z}}}\ge 0
\\ 2^{-(m+{{\mathfrak{z}}})(-\rho)} &\text{ if } {{\mathfrak{z}}}\le 0
\end{cases} \] Hence $\|\Gamma\|_{L^1\to L^1} {\lesssim} 2^{(m+{{\mathfrak{z}}})\rho}$. For the $L^\infty \to L^\infty $ bound we get $\|\Gamma\|_{L^\infty\to L^\infty} {\lesssim} 2^{-{{\mathfrak{z}}} d} 2^{(m+{{\mathfrak{z}}})\rho}$ for ${{\mathfrak{z}}} \ge 0$ and
$\|\Gamma\|_{L^\infty\to L^\infty} {\lesssim} 2^{(m+{{\mathfrak{z}}})\rho}$ for ${{\mathfrak{z}}} \le 0$. Now apply interpolation.
\end{proof}

We will apply the lemma with $\rho= \frac{U(p)}p= \frac{d-1}{p'}- \frac 1{(\mu+1)p}$.
The goal is to show
\[ \Big( \int \sup_{t\in E^\mu} |A_t g(x)|^p w_\alpha(x) dx\Big)^{1/p} {\lesssim} \|f\|_{L^p(w_\alpha)}\] for $\alpha= U(p)$ and $p> 1+ (\mu+1)^{-1}(d-1)^{-1}$.

The inequalities
\begin{align*} \Big( \int_{|x|<\frac 14} \sup_{t\in E^\mu} |A_t g(x)|^p w_\alpha(x) dx\Big)^{1/p} &{\lesssim} \|g\|_{L^p(w_\alpha)}\\
\Big( \int_{|x|>4} \sup_{t\in E^\mu} |A_t g(x)|^p w_\alpha(x) dx\Big)^{1/p} &{\lesssim} \|g\|_{L^p(w_\alpha)}
\end{align*} follow from the known unweighted estimates, for all $\alpha$.

It remains to show
\begin{equation}
\Big( \int_{\frac 14\le|x|\le 4} \sup_{t\in E^\mu} |A_t g(x)|^p w_\alpha(x) dx\Big)^{1/p} {\lesssim} \|g\|_{L^p(w_\alpha)}
\end{equation}
Setting $f(y)= g(y)|y|^{\alpha/p} $ this is equivalent with
\begin{equation}
\Big( \int_{\frac 14\le|x|\le 4} \sup_{t\in E^\mu} |A_t [f|\cdot|^{-\alpha}](x)|^p dx\Big)^{1/p} {\lesssim} \|f\|_{L^p},
\end{equation}
with unweighted $L^p$ norms. In what follows we set
\[ v_p(y):= |y|^{-\frac{U(p)}{p}} \equiv |y|^{\frac{1}{(\mu+1)p}-\frac{d-1}{p'}}.\]
Instead of writing $\frac 14\le |x|\le 4$ we will write $|x|\sim 1$.

We may assume that $f$ is supported in $\{|y|\le \frac 1{100}\}$, otherwise we can again use the unweighted bounds.

Let \[ b_l= 1+2^{- l \frac{\mu}{\mu+1}}, \qquad I_l = [b_{l+1}, b_l] \]
The sequence $l\mapsto b_l-1$ is lacunary.
Note that the dilation set $E^\mu\cap I_l$ is essentially equally spaced, with $2^{-l}$ gaps, i.e. $|n^{-\mu}-(n+1)^{-\mu} |\approx 2^{-l} $ if $n^{-\mu}\in I_l$.

We can write (at least a priori for sufficiently smooth $f$)
\begin{subequations}
\begin{align}
\notag&\sup_{t\in E^\mu} |A_t f(x)|=\sup_l \sup_{t\in E^\mu\cap I_l} \Big| \sum_{j=0}^\infty A_t^j L_j(f v_p ) \Big|
\\
\label{eq:Ia}
&\quad\le \sup_l \sup_{t\in E^\mu\cap I_l} \Big| \sum_{j=l}^\infty A_t^j L_j(f v_p ) \Big|
\\
\label{eq:IIa}
&\quad+
\sup_l \sup_{t\in E^\mu\cap I_l} \Big| \sum_{l\frac{\mu}{\mu+1} <j\le l} A_t^j L_j(f v_p ) \Big|
\\
\label{eq:III}
&\quad+
\sup_l \sup_{t\in E^\mu\cap I_l} \Big| \sum_{0\le j\le l\frac{\mu}{\mu+1} } \big( A_t^j L_j(f v_p ) - A_{b_{l}}^j L_j (fv_p) \big) \Big|
\\
\label{eq:IV}
&\quad +
\sup_l \sup_{t\in E^\mu\cap I_l} \Big| \sum_{0\le j\le l\frac{\mu}{\mu+1} } A_{b_{l}}^j L_j (fv_p) \Big|
\end{align}
\end{subequations}
For all $\alpha< (d-1)(p-1)$ the terms \eqref{eq:III} and \eqref{eq:IV} satisfy better estimates, due to the lacunarity of the sequence $(b_l-1)_{l\ge 0} $.
\begin{lemma} For $1<p<\infty$, $\alpha<(d-1)(p-1)$
\begin{align}
\Big\|\sup_l \sup_{t\in E^\mu\cap I_l} \Big| \sum_{0\le j\le l\frac{\mu}{\mu+1} } A_{b_{l}}^j L_j (fv_p) \Big|\Big\|_{L^p(|x|\sim 1)}{\lesssim} \|f\|_p
\\
\Big\| \sup_l \sup_{t\in E^\mu\cap I_l} \Big| \sum_{0\le j\le l\frac{\mu}{\mu+1} } \Big( A_t^j L_j(f v_p ) - A_{b_{l}}^j L_j (fv_p) \Big) \Big|\Big\|_{L^p(|x|\sim 1)} {\lesssim} \|f\|_p
\end{align}
\end{lemma}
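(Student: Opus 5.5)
Both estimates rest on the fact that the pieces with $j\le l\frac{\mu}{\mu+1}$ only see a lacunary set of dilations. I would reduce them to a weighted lacunary maximal function in the first case and to a square function in the second.

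\emph{First estimate.} Since $A^j_{b_l}L_j$ does not depend on $t$, the inner supremum is trivial. The partial sum $\sum_{0\le j\le J}\widetilde\psi_j*\psi_j$ equals $2^{Jd}\Phi_0(2^J\cdot)$ by \eqref{eq:telescoping}. Hence, with $J(l)=\lfloor l\frac{\mu}{\mu+1}\rfloor$ and $\Phi_J=2^{Jd}\Phi_0(2^J\cdot)$,
\[\sum_{0\le j\le J(l)}A^j_{b_l}L_j(fv_p)=\sigma_{b_l}*\Phi_{J(l)}*(fv_p).\]
The kernel $\sigma_{b_l}*\Phi_{J}$ is bounded by $2^{J}$ times the indicator of a $2^{-J}$-neighbourhood of the sphere of radius $b_l$. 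Writing $b_l=1+2^{-l\mu/(\mu+1)}\approx 1+2^{-J}$, the sphere of radius $b_l$ centered at $x$ with $|x|\sim 1$ lies within distance $\lesssim 2^{-J}$ of the sphere of radius $1$ around $x$. So the whole operator is dominated by
\[\sup_J 2^{J}\int_{||x-y|-1|\le C2^{-J}}|f(y)|v_p(y)\,dy .\]
This is essentially the maximal function $\sup_J|\sigma*\Phi_J*(|f|v_p)|$, i.e.\ a single spherical average smoothed at scales $2^{-J}$, which is the standard model for lacunary spherical maximal functions. By translation and reflection, for $|x|\sim1$ the weight singularity at $y=0$ becomes a singularity at the point $x$ on the sphere $\{|x-y|=1\}$ (the point $y=0$ lies on the shell when $|x|=1$).

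I would handle this by the Duoandikoetxea--Vega theory: $\sup_J|\sigma*\Phi_J*g|$ is bounded on $L^p(w_\beta)$ in the lacunary range, and here one needs $v_p^{p}=w_{-U(p)}$, i.e.\ $\alpha=U(p)<(d-1)(p-1)$. More concretely, decompose $|y|\approx 2^{-m-\mathfrak z}$ and apply Lemma \ref{lem:Gamma} with $\rho=U(p)/p$ to the mollified pieces. The decay coming from $\rho<(d-1)/p'$ then sums in $\mathfrak z$ and $m$. This gives the first inequality.

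\emph{Second estimate.} For $t\in I_l$ we have $|t-b_l|\le 2^{-l\mu/(\mu+1)}\le 2^{-J(l)}$. For $j\le J(l)$ write
\[A^j_t-A^j_{b_l}=\int_{b_l}^t\tfrac{d}{ds}A^j_s\,ds .\]
The operator $(t-b_l)\frac{d}{ds}A^j_s$ has norm $\lesssim 2^{j-J(l)}$ times an operator of the same type as $A^j_s$, with kernel $2^{j}\times$(smooth bump in a $2^{-j}$-shell). Summing over $j\le J(l)$, the geometric factor $2^{j-J(l)}$ yields a bound by $\sum_{n\ge0}2^{-n}\sup_J \mathcal A_{J-n}(|f|v_p)$. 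Here $\mathcal A_J$ denotes the positive shell average at width $2^{-J}$ around radius $s\in[1,2]$, uniformly over $s$ in the $2^{-J}$-window. That is the same kind of lacunary object as in the first step, so the same weighted argument applies.

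\emph{Main obstacle.} The hardest step is the weighted bound for the lacunary-smoothed maximal function near the critical weight, in particular ensuring that the sup over $s$ in a window of length $2^{-J}$ costs nothing. I would first try to dominate everything pointwise by $M_{HL}$-type averages over $2^{-J}$-shells. I would then verify the $L^p\to L^p(|x|\sim1)$ bound directly via Lemma \ref{lem:Gamma}, using $\rho<(d-1)/p'$ to gain over the volume $2^{-(m+\mathfrak z)(d-1)}$ of the intersection of the shell with $\{|y|\approx 2^{-m-\mathfrak z}\}$.
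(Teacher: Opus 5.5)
Your reductions are correct, and they give more than you use. Since $b_l-1\in(2^{-J(l)-1},2^{-J(l)}]$, and $|t-1|\le 2^{-J(l)}\le 2^{-j}$ for $t\in I_l$ and $j\le J(l)$, every shell of width $\approx 2^{-j}$ about a sphere of radius $t$ or $b_l$ lies in a shell of width $\approx 2^{-j}$ about the unit sphere. Hence both \eqref{eq:IV} and \eqref{eq:III} are pointwise $\lesssim \mathcal N(|f|v_p)$, where $\mathcal N g(x)=\sup_J 2^J\int_{||x-y|-1|\le C2^{-J}}|g(y)|\,dy$. In particular the supremum over $s$ in a window, which you call the main obstacle, costs nothing. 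The paper gives no argument for this lemma beyond its remark on the lacunarity of $(b_l-1)$, so your reduction is a reasonable way to make that precise.

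\textbf{The gap.} What you do not prove is the weighted bound $\|\mathcal N(|f|\,|\cdot|^{-\rho})\|_{L^p(|x|\sim 1)}\lesssim\|f\|_p$ for $\rho=\alpha/p<(d-1)/p'$, and this is the whole content of the lemma.
\begin{itemize}
\item It cannot be quoted from \cite{DuoandikoetxeaVega}. That paper treats $\sup_k|A_{2^k}g|$ (lacunary radii), not a fixed radius with lacunary shell widths.
\item The Lemma \ref{lem:Gamma} sketch only closes for $p\le 2$. Lemma \ref{lem:Gamma} costs $2^{(m+\mathfrak z)\rho}$, a net $2^{m\rho}$ after summing in $\mathfrak z$. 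The only compensating decay in that scheme is the unweighted $L^p$ norm of the frequency-$2^m$ spherical piece. That norm is $2^{-m(d-1)/p'}$ for $p\le 2$, but only $2^{-m(d-1)/p}$ for $p\ge 2$. So for $p>2$ you get $\alpha<d-1$ rather than $\alpha<(d-1)(p-1)$.
\item The volume gain $2^{-k(d-1)}$ you mention at the end is an $L^\infty$ fact. It must be interpolated against an $L^p$ bound, not just invoked; the paper does this for \eqref{eq:Ia} via analytic families.
\end{itemize}

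\textbf{A short repair using your positive majorant.}
\begin{itemize}
\item \emph{$L^\infty$ gain.} If $g$ is supported in $\{|y|\approx 2^{-k}\}$, a $C2^{-J}$-shell about a unit sphere meets $\{|y|\le 2^{-k}\}$ in a set of measure $\lesssim 2^{-k(d-1)}\min\{2^{-J},2^{-k}\}$. Hence $\|\mathcal N g\|_\infty\lesssim 2^{-k(d-1)}\|g\|_\infty$.
\item \emph{Unweighted $L^{p_0}$ bound, every $p_0>1$.} Dominate $\mathcal N g$ by $\sup_J\sigma*\Phi^+_J*|g|$, where $\Phi^+\ge 0$ is a smooth bump with $\Phi^+\ge 1$ on a large ball. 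Telescope $\Phi^+_J=\Phi^+_0+\sum_{1\le j\le J}(\Phi^+_j-\Phi^+_{j-1})$. Each $\sigma*(\Phi^+_j-\Phi^+_{j-1})$ is bounded on $L^1$ uniformly in $j$. Its $L^2$ operator norm is $\lesssim 2^{-j\min\{1,(d-1)/2\}}$, by the vanishing mean together with the decay of $\widehat\sigma$. These bounds are summable on $L^{p_0}$.
\item \emph{Interpolation.} Linearize the supremum and apply Riesz--Thorin between $L^{p_0}$ and $L^\infty$. This gives $\|\mathcal N(g\chi_k)\|_p\lesssim 2^{-k(d-1)(1-p_0/p)}\|g\chi_k\|_p$.
\item \emph{Summation.} For $g=|f|v_p$ one has $\|g\chi_k\|_p\lesssim 2^{k\rho}\|f\chi_k\|_p$. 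The sum over $k$ converges if $\rho<(d-1)(1-p_0/p)$. Any $\rho<(d-1)/p'$ satisfies this once $p_0$ is close enough to $1$.
\end{itemize}
This proves both estimates for all $1<p<\infty$, and it avoids Lemma \ref{lem:Gamma} and complex interpolation altogether.
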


We thus focus on the terms \eqref{eq:Ia} and \eqref{eq:IIa}.
\begin{lemma}
For $p(\mu)<p<\infty $,
\begin{equation}\label{eq:Iaest}\Big\|
\sup_l \sup_{t\in E^\mu\cap I_l} \Big| \sum_{j=l}^\infty A_t^j L_j(f v_p ) \Big|
\Big\|_{L^p(|x|\sim 1)} {\lesssim} \|f\|_p
\end{equation}
\end{lemma}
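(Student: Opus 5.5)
We first dispose of the supremum in $l$ by $\ell^p$-summation: pointwise,
\[
\sup_l \sup_{t\in E^\mu\cap I_l} \Big| \sum_{j\ge l} A_t^j L_j(fv_p)\Big| \le \sum_{n\ge 0}\Big(\sum_l \sup_{t\in E^\mu\cap I_l}|A_t^{l+n}L_{l+n}(fv_p)|^p\Big)^{1/p}.
\]
It therefore suffices to prove, for each $n\ge0$,
\[
\Big(\sum_l \big\|\sup_{t\in E^\mu\cap I_l}|A^{l+n}_tL_{l+n}(fv_p)|\big\|^p_{L^p(|x|\sim1)}\Big)^{1/p}\lesssim 2^{-\epsilon n}\|f\|_p
\]
for some $\epsilon>0$. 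With $j=l+n$, the set $E^\mu\cap I_l$ consists of about $2^{l/(\mu+1)}$ points with gaps $\approx 2^{-l}\ge 2^{-j}$. Since $j\ge l$, each frequency $2^j$ piece sees the points as $2^{-j}$-separated, so the $\delta$-entropy argument of Lemma \ref{lem:k<j} (the pointwise estimate \eqref{eq:ptwest}, with no localization in $x$ beyond $|x|\sim1$) gives
\[
\big\|\sup_{t\in E^\mu\cap I_l}|A^j_t g|\big\|_{L^p(|x|\sim 1)}\lesssim 2^{-j(d-1)/p'}\,\big(\#(E^\mu\cap I_l)\big)^{1/p}\|g\|_p
\]
for $1\le p\le2$. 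This bound comes from interpolating $p=1$ (where $\|A^j_t\|_{1\to1}\lesssim1$) with $p=2$ (where $\|A^j_t\|_{2\to2}\lesssim 2^{-j(d-1)/2}$); for $p\ge 2$ one interpolates instead with the trivial $L^\infty$ bound. Here $\#(E^\mu\cap I_l)\approx 2^{l/(\mu+1)}$.

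The weight $v_p$ must then be absorbed using the frequency localization. Since $f$ is supported in $|y|\le 1/100$, we split $fv_p=\sum_m \chi_m v_p f$ dyadically and use that $L_j$ has kernel $2^{jd}\psi_1(2^{j}\cdot)$. We write $L_j(\chi_m v_p f)$ in the form of Lemma \ref{lem:Gamma} with $m$ replaced by $j$ and $\mathfrak z=m-j$, and $\rho=U(p)/p$. That lemma gives $\|L_j(\chi_m v_p f)\|_p\lesssim 2^{m\rho}2^{-(m-j)d/p'}\|\chi_m f\|_p$ for $m\ge j$ and $2^{m\rho}\|\chi_mf\|_p$ for $m\le j$. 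Since $\rho<d/p'$, summing in $m$ (using disjointness of the $\chi_m$ and Hölder) yields $\|L_j(fv_p)\|_p\lesssim 2^{j\rho}\|f\|_p$. The vanishing moments of $\psi_1$ help further when $m<j$, but they are not needed here.

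Combining, the $l$-th term is bounded by
\[
2^{-j(d-1)/p'}2^{l/((\mu+1)p)}2^{j\rho}\|f\|_p = 2^{-j/((\mu+1)p)}\cdot 2^{j/((\mu+1)p)}\cdot 2^{-n/((\mu+1)p)}\|f\|_p,
\]
using $\rho=\frac{d-1}{p'}-\frac1{(\mu+1)p}$. This is $2^{-n/((\mu+1)p)}\|f\|_p$, which decays in $n$ but is not summable in $l$ as it stands.

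This is the main obstacle: $\ell^p$-summability in $l$ requires that distinct $l$ see essentially disjoint pieces of $f$. We would therefore refine the previous step and keep the $m$-decomposition inside the $l$-sum. The $m\approx j$ part contributes $\|\chi_m f\|_p$ with $m\approx l+n$, and these pieces are summable in $\ell^p$ over $l$ for fixed $n$. The parts with $|m-j|$ large carry the factor $2^{-|m-j|c}$ with $c=\min(d/p'-\rho,\rho)$, which is positive when $\rho>0$, i.e.\ $p>p(\mu)$. For $m<j$ one uses $\|\chi_mv_p f\|$ together with the fact that $\|L_j\|$ acting on a function supported at scale $2^{-m}\gg2^{-j}$ costs nothing but also gains nothing; there we instead exploit the $2^{m\rho}$ factor versus $2^{j\rho}$. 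A Schur-type summation in $(l,m)$ then gives $\big(\sum_l\cdots\big)^{1/p}\lesssim 2^{-\epsilon n}\big(\sum_m\|\chi_mf\|_p^p\big)^{1/p}=2^{-\epsilon n}\|f\|_p$, and finally we sum over $n$.

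For $p>2$ the $L^2$ step is replaced by interpolation between $p=2$ and $p=\infty$. The resulting exponent is again critical exactly at $\alpha=U(p)$, so we expect the same bookkeeping to work.
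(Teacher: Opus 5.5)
Your argument for $p(\mu)<p\le 2$ is correct and is the paper's argument. You replace the supremum over the finite set $E^\mu\cap I_l$ by an $\ell^p$ sum and use $\|A^{l+n}_t\|_{p\to p}\lesssim 2^{-(l+n)(d-1)/p'}$ together with Lemma~\ref{lem:Gamma} on the pieces $fv_p\chi_{l+n+\mathfrak z}$. The resulting factor $2^{-n/((\mu+1)p)}$, times a kernel decaying in $|\mathfrak z|$, is then summed using disjointness of the $\chi_{l+n+\mathfrak z}$ in $l$. The paper pulls the $\mathfrak z$-sum out by Minkowski first, which is equivalent to your Young-type summation in $(l,m)$.

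The range $2<p<\infty$, however, has a genuine gap. You propose to interpolate $\|A^j_t\|_{2\to 2}\lesssim 2^{-j(d-1)/2}$ with the trivial $L^\infty$ bound. That gives only $\|A^j_t\|_{p\to p}\lesssim 2^{-j(d-1)/p}$. Take $j=l+n$, $\rho=U(p)/p$ and $\#(E^\mu\cap I_l)\approx 2^{l/(\mu+1)}$, already at $\mathfrak z=0$, where Lemma~\ref{lem:Gamma} is sharp. Then the $l$-th term is
\[
2^{-j\frac{d-1}{p}}\,2^{j\rho}\,2^{\frac{l}{(\mu+1)p}}=2^{(l+n)(d-1)(1-\frac 2p)}\,2^{-\frac{n}{(\mu+1)p}},
\]
which grows exponentially in $l$. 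With $\rho=\alpha/p$ this scheme closes only for $\alpha\le U(2)<U(p)$, so the claim that the exponent is ``again critical exactly at $\alpha=U(p)$'' is false. The loss comes from decoupling the weight from the averaging operator. The functions that make Lemma~\ref{lem:Gamma} sharp are concentrated near the origin, and on $\{|x|\sim 1\}$ the operator $A^j_t$ acts on such functions much better than its unweighted $L^p$ norm indicates.

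The missing ingredient is the weighted $L^\infty$ endpoint used in the paper. For $|x|\sim 1$ and $t\in[1,2]$ one has $|A_t(f|\cdot|^{1-d}\chi_m)(x)|\lesssim\|f\|_\infty$. The reason is that the sphere $x+tS^{d-1}$ meets $\{|y|\le 2^{-m}\}$ in a cap of measure $O(2^{-m(d-1)})$, which exactly offsets $|y|^{1-d}\approx 2^{m(d-1)}$. The paper interpolates this with the $p=2$ case of your estimate via an analytic family in the weight exponent. This is natural because $U(p)/p=(d-1)-(d-1+\frac1{\mu+1})\frac1p$ is affine in $1/p$ and equals $d-1$ at $p=\infty$, and imaginary powers of $|y|$ are harmless on the boundary lines.

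Alternatively, you can keep your decoupled structure and replace the trivial $L^\infty$ bound by a localized one. If $g$ is supported in $\{|y|\lesssim 2^{-r}\}$ with $r=\min\{m,j\}$, as $L_j(\chi_m v_p f)$ is, then $\|A^j_tg\|_{L^\infty(|x|\sim 1)}\lesssim 2^{-r(d-1)}\|g\|_\infty$. Interpolating with $L^2$ gives, for each $t$, the norm $2^{-j(d-1)/p}2^{-r(d-1)(1-2/p)}$. Combined with Lemma~\ref{lem:Gamma}, the $(l,\mathfrak z)$-term becomes $2^{-n/((\mu+1)p)}\min\{2^{-\mathfrak z(\frac1{p'}+\frac1{(\mu+1)p})},2^{\mathfrak z U(2)/p}\}$, and your summation in $(l,\mathfrak z,n)$ then goes through.
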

\begin{proof}
Observe that for $\eqref{eq:Ia}$ we may estimate
\begin{align*} &\Big\|
\sup_l \sup_{t\in E^\mu\cap I_l} \Big| \sum_{j=l}^\infty A_t^j L_j(f v_{p} ) \Big|
\Big\|_{L^p(|x|\sim 1)}
\\&{\lesssim} \sum_{n=0}^\infty \sum_{{{\mathfrak{z}}}\in {{\mathbb{Z}}}}
\Big\|\sup_{l\ge 0} \big|
\sup_{t\in E^\mu\cap I_l} A_t^{l+n} L_{l+n} (f v_p \chi_{l+n+{{\mathfrak{z}}}}) \big|\Big\|_{L^p(|x|\sim 1)}
\\
&{\lesssim} \sum_{n=0}^\infty \sum_{{{\mathfrak{z}}}\in {{\mathbb{Z}}}}
\Big(\sum_{l=0}^\infty \sum_{t\in E^\mu \cap I_l}
\|A_t^{l+n} L_{l+n} (f v_p \chi_{l+n+{{\mathfrak{z}}}})\|_p^p\Big)^{1/p}
\end{align*}
Now for $1\le p\le 2$,
\begin{multline}
\|A_t^{l+n} L_{l+n} (f v_p \chi_{l+n+{{\mathfrak{z}}}})\|_p {\lesssim} 2^{-(l+n)(d-1)/p'} \| L_{l+n} (f v_p \chi_{l+n+{{\mathfrak{z}}}})\|_p \\ {\lesssim} 2^{-(l+n)(d-1)/p'}2^{(l+n+{{\mathfrak{z}}}) \frac{U(p)}{p}} \min\{ 1, 2^{-{{\mathfrak{z}}} d/p'} \}\|f\chi_{l+n+{{\mathfrak{z}}}}\|_p,
\end{multline}
by Lemma \ref{lem:Gamma}. Also notice that $\#(E^\mu\cap I_l) {\lesssim} 2^{l/(\mu+1)} $.
Thus we obtain for $1\le p\le 2$, with $\alpha=U(p)= (d-1)(p-1)-\frac{1}{\mu+1}$,
\begin{align}
\notag&\Big(\sum_{l=0}^\infty \sum_{t\in E^\mu \cap I_l}
\|A_t^{l+n} L_{l+n} (f v_p \chi_{l+n+{{\mathfrak{z}}}})\|_p^p\Big)^{1/p}
\\ \notag &{\lesssim}
\Big(\sum_{l=0}^\infty \sum_{t\in E^\mu \cap I_l}
2^{-(l+n)(d-1)(p-1)}2^{(l+n+{{\mathfrak{z}}}) U(p)} \min\{ 1, 2^{-{{\mathfrak{z}}} dp/p'} \} \|f\chi_{l+n+{{\mathfrak{z}}}}\|_p^p \Big)^{1/p}
\\ \label{eq:ple2part}
&{\lesssim}\min\{ 2^{-{{\mathfrak{z}}} (\frac 1{p'} + \frac{1}{(\mu+1)p})}, 2^{{{\mathfrak{z}}}\frac{U(p)}p} \} 2^{-\frac{n}{(\mu+1)p}} \Big(\sum_{l=0}^\infty \|f\chi_{l+n+{{\mathfrak{z}}}}\|_p^p \Big)^{1/p} .
\end{align}
We take this result for $p=2$ and interpolate with an inequality for $p=\infty$. Namely $A_t (f|\cdot|^{1-d}\chi_{m+n} )=O(1)$ (since the spherical integral is over a set of diameter ${\lesssim} 2^{-m-n}$) and thus
\[\sup_{l\ge 0} \sup_{t\in E^\mu \cap I_l}
\sup_x|A_t^{l+n} L_{l+n} (f |\cdot|^{1-d} \chi_{l+n+{{\mathfrak{z}}}})(x)| {\lesssim} \sup_l \|f \chi_{l+n+{{\mathfrak{z}}}})\|_\infty\]
at least for continuous bounded $f$.
Hence, by an interpolation with analytic families argument, using for $0\le {\operatorname{Re\,}}(z)\le 1$
\[ A_t [f|\cdot|^{-(d-1)(1-\frac{1-z}2)}]=\int f(x+t{\omega}) |x+t{\omega}|^{(1-d)(\frac 12+\frac 1{2z})} d\sigma({\omega}),\]
we obtain for $2\le p<\infty $
\begin{multline}\label{eq:pge2part}\Big(\sum_{l=0}^\infty \sum_{t\in E^\mu \cap I_l}
\|A_t^{l+n} L_{l+n} (f v_p \chi_{l+n+{{\mathfrak{z}}}})\|_p^p\Big)^{1/p} \\ {\lesssim} \min\{ 2^{-{{\mathfrak{z}}} \frac{\mu+2}{(\mu+1)p}} , 2^{{{\mathfrak{z}}} \frac{U(p)}{p} }\} 2^{-n \frac{U(p)}{p}}
\Big(\sum_{l=0}^\infty \|f\chi_{l+n+{{\mathfrak{z}}}}\|_p^p \Big)^{1/p} .\end{multline}

Of course, $(\sum_{l=0}^\infty \|f\chi_{l+n+{{\mathfrak{z}}}}\|_p^p )^{1/p} {\lesssim} \|f\|_p.$
For $p(\mu)$ we have $U(p)>0$. Hence, for $p(\mu)<p<\infty$ we may sum in ${{\mathfrak{z}}}, n$ above and obtain the desired result \eqref{eq:Iaest} from \eqref{eq:ple2part} when $p(\mu)<p\le 2$ and \eqref{eq:pge2part} when $2\le p<\infty$.
\end{proof}
We now turn to the term \eqref{eq:IIa} and argue similarly.
\begin{lemma}\label{lem:IIaest}
For $p(\mu)<p<\infty $,
\begin{equation}\label{eq:IIaest}\Big\|
\sup_l \sup_{t\in E^\mu\cap I_l} \Big| \sum_{l\frac{\mu}{\mu+1}\le j\le l} A_t^j L_j(f v_p ) \Big|
\Big\|_{L^p(|x|\sim 1)} {\lesssim} \|f\|_p
\end{equation}
\end{lemma}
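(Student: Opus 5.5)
We follow the scheme of the proof of \eqref{eq:Iaest}: decompose in frequency and in the size of $|y|$, and sum. We write $j=l-n$ with $0\le n\le \frac{l}{\mu+1}$ and split $f=\sum_{\mathfrak z}f\chi_{j+\mathfrak z}$. Since $\sup_l$ of a sum over $n$ is at most the sum over $n$ of the $\sup_l$, and since we may replace $\sup_l$ by an $\ell^p$ sum in $l$, the left side of \eqref{eq:IIaest} is bounded by
\[\sum_{n\ge0}\sum_{\mathfrak z\in\mathbb Z}\Big(\sum_{l\ge (\mu+1)n}\Big\|\sup_{t\in E^\mu\cap I_l}|A_t^{l-n}L_{l-n}(fv_p\chi_{l-n+\mathfrak z})|\Big\|_{L^p(|x|\sim1)}^p\Big)^{1/p}.\]
Now the frequency scale $2^{l-n}$ is coarser than the gap $2^{-l}$ of $E^\mu\cap I_l$, so summing over all $t\in E^\mu\cap I_l$ is wasteful. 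Instead we cover $E^\mu\cap I_l$ (an interval of length $\approx 2^{-l\mu/(\mu+1)}$) by intervals of length $2^{-(l-n)}$. Their number is $N(E^\mu\cap I_l,2^{n-l})\lesssim \min\{2^{l/(\mu+1)},2^{l/(\mu+1)-n}\}\approx 2^{l/(\mu+1)-n}$, which holds in the range $n\le l/(\mu+1)$. On each such interval we use the pointwise estimate \eqref{eq:ptwest}: a value at the center plus $2^{-(l-n)}$ times the integral of the $t$-derivative. The derivative term is controlled by the same bounds, since $\tfrac{d}{dt}A^{j}_t$ acts like $2^{j}A^j_t$.

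For $1\le p\le 2$ we estimate each single average exactly as before. This gives $\|A^{j}_tL_j(fv_p\chi_{j+\mathfrak z})\|_p\lesssim 2^{-j(d-1)/p'}2^{(j+\mathfrak z)U(p)/p}\min\{1,2^{-\mathfrak z d/p'}\}\|f\chi_{j+\mathfrak z}\|_p$, using $\|A^j_t\|_{L^p\to L^p}\lesssim 2^{-j(d-1)/p'}$ (interpolating the $L^1$ and $L^2$ bounds) and Lemma \ref{lem:Gamma}. Multiplying by the count $2^{l/(\mu+1)-n}$ and inserting $U(p)=(d-1)(p-1)-\frac1{\mu+1}$, the $p$-th power becomes
\[2^{-(l-n)/(\mu+1)}\,2^{l/(\mu+1)-n}\,2^{\mathfrak z U(p)}\min\{1,2^{-\mathfrak z dp/p'}\}=2^{-n\frac{\mu}{\mu+1}}\,2^{\mathfrak zU(p)}\min\{1,2^{-\mathfrak z dp/p'}\}.\]
This decays geometrically in $n$ and is summable in $\mathfrak z$, because $U(p)>0$ and $d/p'-U(p)/p>0$. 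Summing $\|f\chi_{l-n+\mathfrak z}\|_p^p$ over $l$ gives at most $\|f\|_p^p$, which yields \eqref{eq:IIaest} for $p(\mu)<p\le2$.

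For $2\le p<\infty$ we interpolate the $p=2$ estimate with the $L^\infty$ bound. As in the previous lemma, we use the analytic family $f\mapsto f|\cdot|^{-(d-1)(1-\frac{1-z}{2})}$ together with the trivial bound $A_t(f|\cdot|^{1-d}\chi_m)=O(\|f\|_\infty)$. This gives, for each fixed $n$, a bound with decay $2^{-c(p)n}$, $c(p)>0$, and summability in $\mathfrak z$. We expect this to be the main obstacle. After the Assouad-type counting, the $p=2$ decay in $n$ is only $2^{-n\mu/(2(\mu+1))}$, while the $L^\infty$ endpoint has no decay in $n$ and no gain from counting, since the supremum over all $t$ is simply bounded. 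So we must check that the interpolated exponent remains positive. Positivity does hold, because interpolating a geometric decay with an $O(1)$ bound keeps a factor $\theta>0$. It remains to verify that the $\mathfrak z$-dependence is still summable, i.e.\ that both the $\mathfrak z\to+\infty$ and $\mathfrak z\to-\infty$ exponents stay of the correct sign. This mirrors the $\min\{2^{-\mathfrak z\frac{\mu+2}{(\mu+1)p}},2^{\mathfrak zU(p)/p}\}$ factor in \eqref{eq:pge2part}, and we expect the same computation to go through.
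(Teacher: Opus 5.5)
Your proposal is correct and follows the paper's proof: you write $j=l-n$ and split in $\mathfrak z$, cover $E^\mu\cap I_l$ by $\approx 2^{l/(\mu+1)-n}$ intervals of length $2^{n-l}$ using the fundamental theorem of calculus, and apply Lemma~\ref{lem:Gamma} to obtain the factor $2^{-n\mu/(\mu+1)}2^{\mathfrak z U(p)}\min\{1,2^{-\mathfrak z d(p-1)}\}$ for $p\le 2$, then interpolate with the $L^\infty$ bound for $p\ge 2$. The step you flag as the main obstacle is immediate, because the $L^\infty$ endpoint is uniform in $n$ and $\mathfrak z$, so interpolation just raises the $p=2$ factors to the power $2/p$; however, the analytic family must move the exponent from $\frac{U(2)}{2}=\frac{d-1}{2}-\frac{1}{2(\mu+1)}$ at $\operatorname{Re} z=0$ to $d-1$ at $\operatorname{Re} z=1$, since the formula $|\cdot|^{-(d-1)\frac{1+z}{2}}$ you copied would land on $|y|^{-(d-1)/p'}$ rather than $v_p$.
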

\begin{proof}
We now estimate
\begin{align*} &\Big\|
\sup_l \sup_{t\in E^\mu\cap I_l} \Big| \sum_{l\frac{\mu}{\mu+1} \le j\le l} A_t^j L_j(f v_p ) \Big|
\Big\|_{L^p(|x|\sim 1)}
\\&=
\Big\|
\sup_l \sup_{t\in E^\mu\cap I_l} \Big| \sum_{-\frac{l}{\mu+1} \le n\le 0} A_t^{l+n} L_j(f v_p ) \Big|
\Big\|_{L^p(|x|\sim 1)}
\\&{\lesssim} \sum_{n=-\infty}^{-1} \sum_{{{\mathfrak{z}}}\in {{\mathbb{Z}}}}
\Big\|\sup_{l\ge -n(\mu+1)} \big|
\sup_{t\in E^\mu\cap I_l} A_t^{l+n} L_{l+n} (f v_p \chi_{l+n+{{\mathfrak{z}}}}) \big|\Big\|_{L^p(|x|\sim 1)}
\\
&{\lesssim} \sum_{n=-\infty}^{-1} \sum_{{{\mathfrak{z}}}\in {{\mathbb{Z}}}}
\Big(\sum_{l\ge -n(\mu+1)} \sum_{\substack{s\in E^\mu \cap I_l\\ 2^{-l-n}\text{-separated}}}
\Big\|\sup_{|t-s|\le 2^{-l-n} } |A_t^{l+n} L_{l+n} (f v_p \chi_{l+n+{{\mathfrak{z}}}})|\Big\|_p^p\Big)^{1/p}
\end{align*}
Now an argument using the fundamental theorem of calculus gives
\begin{align*}
&\|\sup_{|t-s|\le 2^{-l-n} } |A_t^{l+n} L_{l+n} (f v_p \chi_{l+n+{{\mathfrak{z}}}})|\|_p
\\&{\lesssim} 2^{-(l+n)(d-1)/p'} \| L_{l+n} (f v_p \chi_{l+n+{{\mathfrak{z}}}})\|_p \\ &{\lesssim} 2^{-(l+n)(d-1)/p'}2^{(l+n+{{\mathfrak{z}}}) \frac{U(p)}{p}} \min\{ 1, 2^{-{{\mathfrak{z}}} d/p'} \}\|f\chi_{l+n+{{\mathfrak{z}}}}\|_p,
\end{align*}
by Lemma \ref{lem:Gamma}. For $n\le 0$, and $l\ge -n(\mu+1)$ a maximal $2^{-l-n}$ separated set of $I_l$ has cardinality ${\lesssim} 2^{\frac{l}{\mu+1}+n}$.
Hence, for $1\le p\le 2$,
\begin{align*}
&\Big(\sum_{l\ge -n(\mu+1)} \sum_{\substack{s\in E^\mu \cap I_l\\ 2^{-l-n}\text{-separated}}}
\Big\|\sup_{|t-s|\le 2^{-l-n} } |A_t^{l+n} L_{l+n} (f v_p \chi_{l+n+{{\mathfrak{z}}}})|\Big\|_p^p\Big)^{1/p}
\\&{\lesssim}
\Big(\sum_{l\ge -n(\mu+1)} 2^{\frac{l}{\mu+1}+n}
2^{-(l+n)(d-1)(p-1) }2^{(l+n+{{\mathfrak{z}}})U(p)}
\min\{1, 2^{-{{\mathfrak{z}}} d(p-1)}\} \|f\chi_{l+n+{{\mathfrak{z}}}}\|_p^p\Big)^{1/p}
\\&{\lesssim}\Big(\sum_{l\ge -n(\mu+1)} 2^{\frac{n\mu}{\mu+1} } 2^{{{\mathfrak{z}}} U(p) }\min\{1, 2^{-{{\mathfrak{z}}} d(p-1)}\} \| f \chi_{l+n+{{\mathfrak{z}}}}\|_p^p\Big)^{1/p}
\\
&{\lesssim} 2^{\frac{n\mu}{(\mu+1)p} }\min\{ 2^{\frac{{{\mathfrak{z}}} U(p)}{p}} , 2^{-{{\mathfrak{z}}}(\frac{1}{p'} +\frac{1}{(\mu+1)p})} \} \|f\|_p
\end{align*}
Again by an interpolation with an $L^\infty$ estimate we get for $2\le p<\infty$, $\alpha=U(p)$

\begin{align*}
&\Big(\sum_{l\ge -n(\mu+1)} \sum_{\substack{s\in E^\mu \cap I_l\\ 2^{-l-n}\text{-separated}}}
\Big\|\sup_{|t-s|\le 2^{-l-n} } |A_t^{l+n} L_{l+n} (f v_p \chi_{l+n+{{\mathfrak{z}}}})|\Big\|_p^p\Big)^{1/p}
\\&{\lesssim} 2^{\frac{n\mu}{(\mu+1)p} }\min\{ 2^{\frac{{{\mathfrak{z}}} U(p)}{p}} , 2^{-{{\mathfrak{z}}}(\frac{\mu+2}{(\mu+1)p})} \} \|f\|_p
\end{align*}
For $p>p(\mu)$ we may sum in $n\le 0$ and ${{\mathfrak{z}}}$ to obtain \eqref{eq:IIaest}
\end{proof}


\bigskip

\begin{thebibliography}{10}

\bibitem{BakOberlinSeegerAJM}
Jong-Guk Bak, Daniel~M. Oberlin, and Andreas Seeger.
\newblock Restriction of {F}ourier transforms to curves and related oscillatory
integrals.
\newblock{\em Amer. J. Math.}, 131(2):277--311, 2009.

\bibitem{BCSS26}
Riju Basak, Surjeet~Singh Choudhary, Chun-Yen Shen, and Daniel Spector.
\newblock Sharp power-weight estimates for the fractional spherical maximal
operator.
\newblock arXiv:2610.13326.

\bibitem{BeltranRoosRutarSeeger}
David Beltran, Joris Roos, Alex Rutar, and Andreas Seeger.
\newblock A fractal local smoothing problem for the wave equation.
\newblock{\em Bull. Lond. Math. Soc.}, 57(12):3667--3690, 2025.

\bibitem{BeltranRoosSeeger-radial}
David Beltran, Joris Roos, and Andreas Seeger.
\newblock Spherical maximal operators with fractal sets of dilations on radial
functions.
\newblock{\em Studia Math.}, 289(1):1--32, 2026.

\bibitem{bergh-lofstrom}
J\"{o}ran Bergh and J\"{o}rgen L\"{o}fstr\"{o}m.
\newblock{\em Interpolation spaces. {A}n introduction}.
\newblock Grundlehren der Mathematischen Wissenschaften, No. 223.
Springer-Verlag, Berlin-New York, 1976.

\bibitem{BourgainJdA1986}
Jean Bourgain.
\newblock Averages in the plane over convex curves and maximal operators.
\newblock{\em J. Analyse Math.}, 47:69--85, 1986.

\bibitem{CarberyConvexBody}
Anthony Carbery.
\newblock An almost-orthogonality principle with applications to maximal
functions associated to convex bodies.
\newblock{\em Bull. Amer. Math. Soc. (N.S.)}, 14(2):269--273, 1986.

\bibitem{Christ-frac}
Michael Christ.
\newblock Endpoint bounds for singular fractional integral operators.
\newblock Unpublished manuscript (1988).

\bibitem{DuoandikoetxeaSeijo}
Javier Duoandikoetxea and Edurne Seijo.
\newblock Weighted inequalities for some spherical maximal operators.
\newblock{\em Illinois J. Math.}, 46(4):1299--1312, 2002.

\bibitem{DuoandikoetxeaVega}
Javier Duoandikoetxea and Luis Vega.
\newblock Spherical means and weighted inequalities.
\newblock{\em J. London Math. Soc. (2)}, 53(2):343--353, 1996.

\bibitem{FraccaroliRoosSeeger}
Marco Fraccaroli, Joris Roos, and Andreas Seeger.
\newblock Power weight inequalities for spherical maximal functions.
\newblock{\em Math. Z.}, 314(1):Paper No. 4, 2026.

\bibitem{FraserHareHareTroscheitYu}
Jonathan~M. Fraser, Kathryn~E. Hare, Kevin~G. Hare, Sascha Troscheit, and Han
Yu.
\newblock The {A}ssouad spectrum and the quasi-{A}ssouad dimension: a tale of
two spectra.
\newblock{\em Ann. Acad. Sci. Fenn. Math.}, 44(1):379--387, 2019.

\bibitem{GarciaCuerva-Rubio}
Jos\'{e} Garc\'{\i}a-Cuerva and Jos\'{e}~L. Rubio~de Francia.
\newblock{\em Weighted norm inequalities and related topics}, volume 116 of
{\em North-Holland Mathematics Studies}.
\newblock North-Holland Publishing Co., Amsterdam, 1985.
\newblock Notas de Matem\'{a}tica [Mathematical Notes], 104.

\bibitem{GreenleafSeegerWainger}
Allan Greenleaf, Andreas Seeger, and Stephen Wainger.
\newblock On {X}-ray transforms for rigid line complexes and integrals over
curves in {${\bf R}^4$}.
\newblock{\em Proc. Amer. Math. Soc.}, 127(12):3533--3545, 1999.

\bibitem{Kurtz80}
Douglas~S. Kurtz.
\newblock Littlewood-{P}aley and multiplier theorems on weighted {$L\sp{p}$}\
spaces.
\newblock{\em Trans. Amer. Math. Soc.}, 259(1):235--254, 1980.

\bibitem{LabaPramanik-Duke}
Izabella {\L}aba and Malabika Pramanik.
\newblock Maximal operators and differentiation theorems for sparse sets.
\newblock{\em Duke Math. J.}, 158(3):347--411, 2011.

\bibitem{LeeJ-weighted}
Juyoung Lee.
\newblock The critical weighted inequalities of the spherical maximal function.
\newblock{\em J. Geom. Anal.}, 35(8):Paper No. 220, 17, 2025.

\bibitem{NagelSteinWainger-lac}
Alexander Nagel, Elias~M. Stein, and Stephen Wainger.
\newblock Differentiation in lacunary directions.
\newblock{\em Proc. Nat. Acad. Sci. U.S.A.}, 75(3):1060--1062, 1978.

\bibitem{RoosSeeger-problems}
Joris Roos and Andreas Seeger.
\newblock Problems on spherical maximal functions.
\newblock To appear in Bull. Amer. Math. Soc., arXiv:2511.11283, 2025.

\bibitem{SeegerTao}
Andreas Seeger and Terence Tao.
\newblock Sharp {L}orentz space estimates for rough operators.
\newblock{\em Math. Ann.}, 320(2):381--415, 2001.

\bibitem{STW-jussieu}
Andreas Seeger, Terence Tao, and James Wright.
\newblock Endpoint mapping properties of spherical maximal operators.
\newblock{\em J. Inst. Math. Jussieu}, 2(1):109--144, 2003.

\bibitem{SeegerWainger-frac}
Andreas Seeger and Stephen Wainger.
\newblock Bounds for singular fractional integrals and related {F}ourier
integral operators.
\newblock{\em J. Funct. Anal.}, 199(1):48--91, 2003.

\bibitem{SeegerWaingerWright1995}
Andreas Seeger, Stephen Wainger, and James Wright.
\newblock Pointwise convergence of spherical means.
\newblock{\em Math. Proc. Cambridge Philos. Soc.}, 118(1):115--124, 1995.

\bibitem{SeegerWaingerWright1997}
Andreas Seeger, Stephen Wainger, and James Wright.
\newblock Spherical maximal operators on radial functions.
\newblock{\em Math. Nachr.}, 187:241--265, 1997.

\bibitem{SteinPNAS1976}
Elias~M. Stein.
\newblock Maximal functions. {I}. {S}pherical means.
\newblock{\em Proc. Nat. Acad. Sci. U.S.A.}, 73(7):2174--2175, 1976.

\end{thebibliography}
\end{document}